\renewcommand\p@subfigure{}   
\newcommand{\bR}{\mathbb{R}}
\newcommand{\bS}{\mathbb{S}}
\newcommand{\bP}{\mathbb{P}}
\newcommand{\sgmin}{\sigma_{\min}}
\newcommand{\diag}{\textup{diag}}
\newcommand{\wnabla}{\widetilde{\nabla}}
\newcommand{\Gf}{{G_f}}
\newcommand{\Gh}{{G_h}}
\newcommand{\wJ}{\widetilde{J}}
\newcommand{\cI}{\mathcal{I}}
\newcommand{\xmid}{x_{\textup{mid}}}
\newcommand{\UU}{U_t U_t^\top}
\newtheorem{defi}{Definition}
\newtheorem{assump}{Assumption}
\newcounter{rmk}
\renewcommand{\thermk}{\arabic{rmk}}
\newenvironment{rmk}[1][]%
{%
  \refstepcounter{rmk}%
  \par\medskip\noindent
  \textbf{Remark~\thermk.}%
  \if\relax\detokenize{#1}\relax\else\ \textbf{(#1)}\fi
  \ \normalfont
}%
{\par\medskip}
\newcommand{\lmin}{{\lambda_{\min}}}
\newcommand{\lmax}{\lambda_{\max}}
\DeclareMathOperator*{\argmin}{arg\,min}
\newenvironment{talign*}
  {\begingroup
   \begin{equation*}\begin{aligned}}
  {\end{aligned}\end{equation*}\endgroup}
\title{Zeroth-Order Constrained Optimization from a Control Perspective via Feedback Linearization}
\author{Runyu Zhang\thanks{Laboratory for Information and Decision Systems, Massachusetts Institute of Technology,  (\email{runyuzha@mit.edu}, \email{gzardini@mit.edu}, \email{asuman@mit.edu}).}
  \and Gioele Zardini\footnotemark[1]
  \and Asuman Ozdaglar\footnotemark[1]
\and Jeff Shamma\thanks{UIUC, Department of Industrial and Enterprise Systems Engineering
  (\email{jshamma@illinois.edu}}).
\and Na Li\thanks{Harvard University, School of Engineering and Applied Sciences, (\email{nali@seas.harvard.edu}})}
\begin{document}

\maketitle

\begin{abstract}
Safe derivative-free optimization under unknown constraints is a fundamental challenge in modern learning and control. Existing zeroth-order (ZO) methods typically still assume access to a first-order oracle of the constraint functions or restrict attention to convex settings, leaving nonconvex optimization with black-box constraints largely unexplored. We propose the zeroth-order feedback-linearization (ZOFL) algorithm for ZO constrained optimization that enforces feasibility without access to the first-order oracle of the constraint functions and applies to both equality and inequality constraints. The proposed approach relies only on noisy, sample-based gradient estimates obtained via two-point estimators, yet provably guarantees constraint satisfaction under mild regularity conditions. It adopts a control-theoretic perspective on ZO constrained optimization and leverages feedback linearization, a nonlinear control technique, to enforce feasibility. Finite-time bounds on constraint violation and asymptotic global convergence guarantees are established for the ZOFL algorithm. A midpoint discretization variant is further developed to improve feasibility without sacrificing optimality. Empirical results demonstrate that ZOFL consistently outperforms standard ZO baselines, achieving competitive objective values while maintaining feasibility.
\end{abstract}

\begin{keywords}
zeroth-order optimization; constrained optimization; feedback linearization; nonlinear control; feasibility guarantees
\end{keywords}

\begin{MSCcodes}
90C56, 93B52, 65K10
\end{MSCcodes}

\section{Introduction}
Designing safe learning methods is both important and challenging. Safety requires guarantees of feasibility at every step, which in turn demands reliable information about the system’s objectives and constraints. In many real-world settings, such information is only accessible through function evaluations—gradients are either unavailable, unreliable, or prohibitively expensive to compute. This makes derivative-free methods natural candidates: they update decisions from sampled outcomes without requiring gradient access. Yet, enforcing strict safety guarantees in these derivative-free settings remains largely unresolved.


Among derivative-free approaches, zeroth-order methods have attracted significant attention due to their simplicity and scalability to high dimensions~\cite{wang18stochastic,nesterov2017random}.
The core idea is to build stochastic \emph{gradient estimators} via finite differences of function evaluations and then plug them into standard gradient-based updates~\cite{berahas2022theoretical}.
For instance, two-point estimators perturb the decision along random isotropic directions (Gaussian or uniform on the unit sphere) and combine the function values to approximate gradients~\cite{nesterov2017random,tang_zeroth-order_2023,ren2023escaping}.
When combined with gradient descent, such estimators yield provable converge rates for unconstrained optimization.

In the constrained setting, most existing ZO algorithms assume black-box objectives but \emph{white-box constraints}.
Explicit knowledge of the constraint set enables efficient projections or local linearizations, ensuring feasibility. 
This has led to a variety of algorithms, including projection–gradient-descent~\cite{tang_zeroth-order_2023,liu_zeroth-order_2018,chen_zo-adamm_2019,ghadimi_mini-batch_2016}, Frank–Wolfe–type methods~\cite{sahu_decentralized_2020,liu24zeroth-order,balasubramanian_zeroth-order_2019}, and Sequential Quadratic Programming (SQP)-style approaches\\\cite{curtis_almost-sure_2023,berahas2025sequential}.\!
However, in many safe learning settings, such as safe RL or chance-constrained optimization~\citep{achiam2017constrained,cui2020adaptive,zhang2011chance}, the constraint functions themselves are \emph{unknown}. 
Here, only noisy zeroth-order estimates of the constraint gradients are available, and feasibility becomes much harder to enforce. 
Most existing work in this regime focuses on convex problems and relies on primal–dual schemes~\citep{zhou_zeroth-order_2025,chen_model-free_2022,yi_linear_2021,nguyen_stochastic_2022,li_zeroth-order_2022,maheshwari22,liu_min-max_2020,chen_continuous-time_2023,hu_gradient-free_2024}. 
For nonconvex problems, guarantees are scarce: some works provide only empirical evidence~\citep{zhou_zeroth-order_2025}, while others require solving expensive convex subproblems at each step~\citep{nguyen_stochastic_2022,li_zeroth-order_2022}. 
Feasibility in these settings remains fragile, typically degrading as noise in the gradient estimators increases~\citep{oztoprak_constrained_2021}.

In contrast, in the first-order (FO) setting, SQP methods are known to be highly effective for nonconvex constrained optimization.
They often outperform primal–dual approaches in practice, particularly when the number of constraints is small relative to the dimension of the decision variable~\citep{gould2003galahad,liu2018comparison,zhang2020proximal,applegate2024infeasibility}. 
Another complementary line of work interprets constrained optimization through a control-theoretic lens~\citep{cerone_new_2024,zhang2025constrainedoptimizationcontrolperspective}. 
Here, the optimization dynamics are viewed as a controlled system: the primal variables are states, the Lagrange multipliers are control inputs, and finding a first-order KKT point corresponds to steering the system to a feasible equilibrium. 
This perspective enables the use of nonlinear control tools, such as \emph{feedback linearization (FL)}, to design algorithms. 
Recent work shows that, under suitable conditions, FL-based schemes recover SQP-like updates and achieve strong performance on nonconvex problems~\citep{zhang2025constrainedoptimizationcontrolperspective}.

\vspace{3pt}
Despite their promise in first-order optimization, FL and SQP approaches have not been systematically studied in the zeroth-order regime. 
Extending them is nontrivial: FL and SQP depends critically on precise first-order information, but in the zeroth-order setting only noisy estimates are available, breaking the mechanisms that ensure feasibility. 
This raises the fundamental question:

\begin{adjustwidth}{0.7cm}{0.7cm}\emph{How can we design zeroth-order methods that handle nonconvex constrained optimization with only noisy gradient estimators, while still providing provable guarantees of constraint satisfaction?}
\end{adjustwidth}

\vspace{3pt}
\noindent\textbf{Our Contributions.} 
We develop a zeroth-order constrained optimization framework that extends feedback linearization ideas to the derivative-free regime, inspired by control and dynamical systems perspective~\citep{cerone_new_2024,zhang2025constrainedoptimizationcontrolperspective}. 
First, we show how to construct an FL scheme tailored to dynamics evolving under noisy gradient information, in contrast to prior approaches that rely on convex relaxations or primal–dual surrogates. 
Second, we demonstrate that full Jacobians are unnecessary: it is enough to approximate a small set of Jacobian–vector products, which can be efficiently estimated via two-point zeroth-order queries. 
Third, we establish theoretical guarantees (\cref{thm:zeroth-order-eq} and~\cref{thm:zeroth-order-ineq}) showing that constraint violations contract toward zero with high probability, up to controllable approximation and discretization errors. 
Finally, we provide empirical evidence that our method consistently achieves stronger feasibility performance than standard baselines, while achieving competitive objective values.


\vspace{3pt}
\noindent\textbf{Notations.}
We use~$\nabla f(x)$ to denote the gradient of a scalar function~$f: \bR^n \to \bR$ evaluated at the point~$x\in \bR^n$ and use~$\nabla^2 f(x)$ to denote its corresponding Hessian matrix. 
We use~$J_h(x)$ to denote the Jacobian matrix of a function~$h: \bR^n\to\bR^m$ evaluated at~$x\in \bR^n$, i.e.~$[J_h(x)]_{i,j} = \frac{\partial h_i(x)}{\partial x_j}$, $i\in [m], j\in[n]$. 
Unless specified otherwise, we use $\|\cdot\|$ to denote the $L_2$ norm of matrices and vectors. We also denote $[x]_+:=\max(x,0)$ where $\max$ is taken entrywise for a vector $x$.


\section{Preliminaries}\label{sec:prelim}
We begin by introducing the constrained optimization setup and reviewing prior work from a control perspective, which motivates our approach.
We then highlight the challenges unique to the ZO setting.

We consider constrained optimization problems of the form 
\begin{equation}
\label{eq:optimization-eq-contraint}
\textstyle    \min_{x\in \bR^n}f(x) \quad \text{s.t. }h(x)=0,
\end{equation}
where~$f:\bR^n\to \bR$ is the objective and~$h:\bR^n\to \bR^m$ encodes equality constraints.
Here we assume that~$f,h$ are differentiable, and additional assumptions will be introduced where needed to support the analysis.
The first-order Karush-Kuhn-Tucker (KKT) conditions are
\begin{equation}
\label{eq:KKT}
\nabla f(x)+J_h(x)^\top \lambda =0, \quad h(x)=0.
\end{equation}
Here,~$J_h(x)$ denotes the Jacobian of~$h$ and~$\lambda \in \bR^m$ are the Lagrange multipliers.

While we begin by focusing on equality-constrained problems for clarity of exposition, our analysis also extends to problems with \textit{inequality }constraints, which will be studied in Section \ref{sec:ineq}.
\subsection{\!First-order Constrained Optimization: A Control Perspective}
\begin{wrapfigure}{r}{0.4\textwidth} 
    \vspace{-18pt}
    \centering
\includegraphics[width=0.9\linewidth]{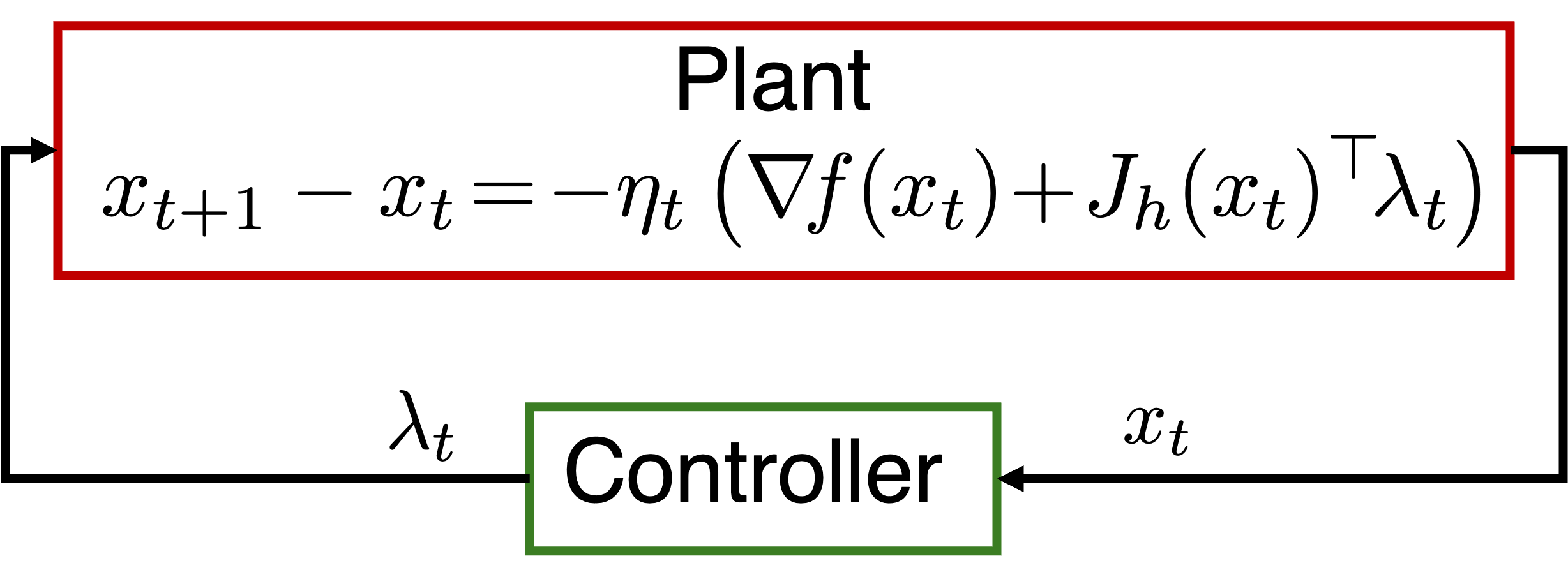}
\vspace{-5pt}
    \caption{\centering Control Perspective for constrained optimization.}
\label{fig:control-perspective-constrained-opt}
\vspace{-15pt}
\end{wrapfigure}

Recent works \cite{cerone_new_2024,zhang2025constrainedoptimizationcontrolperspective} interpret constrained optimization from a control perspective, offering new insights and enabling novel algorithmic designs in the first-order optimization regime. The key idea is to reinterpret \cref{eq:KKT} as the equilibrium of a dynamical system.
Specifically, define the updates
\begin{equation}\label{eq:FO-dynamics}
    x_{t+1} - x_t = -\eta_t\!\left(\nabla f(x_t) + J_h(x_t)^\top \lambda_t\right)\!, 
    ~~~ y_t = h(x_t),
\end{equation}
where~$x_t$ is the system state,~$y_t$ the constraint output, $\lambda_t$ the control input, and $\eta_t>0$ is the stepsize. Note that \eqref{eq:FO-dynamics} incoorporates a wide range of optimization algorithms (cf. \cite{nocedal_numerical_2006,neal2011distributed,chambolle2011first})

At any equilibrium~$(x^\star,\lambda^\star)$ of \cref{eq:FO-dynamics}, we have~$\nabla f(x^\star) + J_h(x^\star)^\top \lambda^\star = 0$.
If, in addition,~$x^\star$ is feasible (i.e.,~$h(x^\star)=0$), then~$(x^\star,\lambda^\star)$ satisfies the KKT
conditions \cref{eq:KKT}.
Hence, the control objective is to design~$\lambda_t$ so that the
closed-loop dynamics drive~$y_t \to 0$ and stabilize~$x_t$ at a feasible equilibrium
(see \cref{fig:control-perspective-constrained-opt}).

To design the controller~$\lambda_t$ to reach a feasible equilibrium, we next introduce the feedback linearization (FL) approach, which is the main focus of this paper.


\noindent\textbf{Feedback linearization (FL).} ~
FL is a classical control technique for stabilizing nonlinear systems of the form
\vspace{-5pt}
\begin{align}
    x_{t+1} - x_t = -b(x_t) + A(x_t)\lambda_t,
    \label{eq:non-linear}
\end{align}
by introducing a new input that cancels the nonlinearities~\cite{isidori1985nonlinear,henson1997feedback}.
If~$G(x)$ is invertible, one can apply a feedback transformation by introducing a new (virtual) control input 
$u_t$  and redefining the original input as ~$\lambda_t = A(x_t)^{-1}(b(x_t) + u_t)$. Substituting this expression into \eqref{eq:non-linear} yields: $x_{t+1} - x_t = u_t$, a linear system for which standard stabilizing controllers are available.

Recall the dynamics in~\cref{eq:FO-dynamics}.
Writing out the constraint evolution gives
\begin{equation}\label{eq:y-dynamics}
    y_{t+1} - y_t 
    \approx J_h(x_t)(x_{t+1}-x_t) = -\underbrace{\eta_t J_h(x_t)\nabla f(x_t)}_{b(x_t)} 
    \underbrace{- \eta_t J_h(x_t)J_h(x_t)^\top}_{A(x_t)} \lambda_t,
\end{equation}
where the terms can be viewed as~$b(x_t)$ and~$A(x_t)\lambda_t$ in~\cref{eq:non-linear}.
Hence choosing
\begin{equation*}
   \lambda_t = -\big(J_h(x_t)J_h(x_t)^\top\big)^{-1}\!\left(J_h(x_t)\nabla f(x_t)+u_t\right),
\end{equation*}
cancels the nonlinear dependence and yields the linearized dynamics~$y_{t+1}-y_t \approx \eta_t u_t$. In this paper, we consider a specific linear controller $u_t = -Ky_t$ where $K\in\bR^{m\times m}$ is a positive definite matrix, thus we get $y_{t+1} - y_t\approx -Ky_t$ and hence the constraints converge exponentially to zero.
This design gives the \emph{\textbf{first-order feedback linearization (FO-FL)}} method:

\begin{tcolorbox}[title={\small FO-FL (Equality Constraints) \cite{cerone_new_2024,zhang2025constrainedoptimizationcontrolperspective}},width=\linewidth]
\vspace{-10pt}
\begin{equation}\label{eq:feedback-linearization}
\begin{split}
    x_{t+1} - x_t &= - \eta_t\left(\nabla f(x_t) + J_h(x_t)^\top \lambda_t\right),\\
    \lambda_t &= -\left(J_h(x_t)J_h(x_t)^\top\right)^{-1}\!\left(J_h(x_t)\nabla f(x_t)-K h(x_t)\right).
\end{split}
\end{equation}
\end{tcolorbox}

FO-FL has been shown to effectively handle nonlinear dynamics, making it well-suited for nonconvex constrained optimization \cite{cerone_new_2024,schropp_dynamical_2000,zhang2025constrainedoptimizationcontrolperspective}.

\subsection{Zeroth-order Constrained Optimization: Baseline and Challenges}
\noindent\textbf{Problem Setup.}~
In many learning and control problems (e.g., safe RL), the gradients of $f$ and $h$ are unavailable; \emph{one can only query their values $f(x)$ and $h(x)$ at selected points $x$, without access to $\nabla f(x)$ or $J_h(x)$}. Zeroth-order optimization aims to solve \cref{eq:optimization-eq-contraint} using only such queries.

\begin{wrapfigure}{r}{0.4\textwidth} 
    \centering
    \vspace{-20pt}
\includegraphics[width=0.9\linewidth]{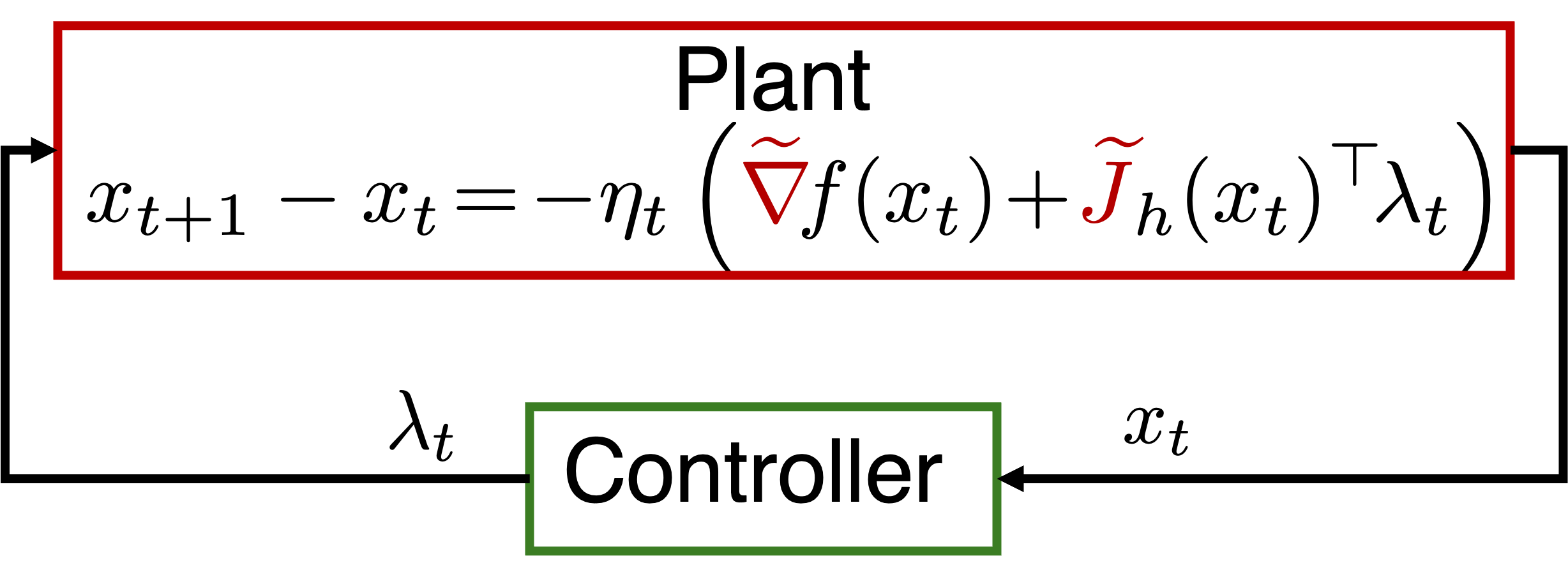}
\vspace{-5pt}
    \caption{\centering Control Perspective for \\Zeroth-Order Constrained Optimization}
\label{fig:control-perspective-constrained-opt-ZO}
\vspace{-30pt}
\end{wrapfigure}

The absence of first-order information motivates the use of stochastic finite-difference estimators for $\nabla f(x)$ and $J_h(x)$. A standard choice is the two-point estimator (cf. \cite{nesterov2017random,ren2023escaping,tang_distributed_2021}):
\begin{equation}\label{eq:two-point-estimator}
\begin{split}
    \wnabla f(x_t) &\!=\! \frac{n}{T_B}\sum_{i=1}^{T_B} 
    \frac{f(x_t \!+\! r_1u_i) \!-\! f(x_t \!-\! r_1u_i)}{2r_1}\,u_i,\!\!\!\!\!\!\!\!\!\!\!\!\!\!\!\!\!\!\!\!\!\!\!\!\\
    \wJ_h(x_t) &\!=\! \frac{n}{T_B}\sum_{i=1}^{T_B} 
    \frac{h(x_t \!+\! r_1u_i) \!-\! h(x_t \!-\! r_1u_i)}{2r_1}\,u_i^\top,\!\!\!\!\!\!\!\!\!\!\!\!\!\!\!\!\!\!\!\!\!\!\!\!
\end{split}
\end{equation}
where~$u_i$ are drawn i.i.d.\ from the~$n$-dimensional unit sphere.
These estimators are nearly unbiased in expectation when the radius $r_1$ is sufficiently small, but can be very noisy, particularly in high dimensions.

\noindent\textbf{A Zeroth-Order Baseline and Its Limitation.}~
Given the gradient estimator (\cref{eq:two-point-estimator}), a natural idea is to substitute these estimates directly into the FO-FL updates (\cref{fig:control-perspective-constrained-opt-ZO}).  This yields the following \emph{\textbf{zeroth-order baseline (ZO-baseline)}}:

\begin{tcolorbox}[title={\small ZO-baseline for Equality-Constrained Optimization},width = \linewidth]
\vspace{-10pt}
\begin{equation}\label{eq:ZO-baseline}
\begin{split}
    x_{t+1} - x_t &= - \eta_t\!\left(\wnabla f(x_t) + \wJ_h(x_t)^\top \lambda_t\right),\\
    \lambda_t &= -\!\left(\wJ_h(x_t)\wJ_h(x_t)^\top\right)^{-1}\!\!\left(\wJ_h(x_t)\wnabla f(x_t)-K h(x_t)\right).
\end{split}
\end{equation}
\end{tcolorbox}

This approach has been explored in recent work on noisy or biased estimators~\cite{oztoprak_constrained_2021}.
However, it suffers from a critical drawback: constraint satisfaction is no longer guaranteed. 
To see this, note that the constraint dynamics become
\begin{small}
\begin{talign*}
    &\quad h(x_{t+1}) - h(x_t) 
    \approx J_h(x_t)(x_{t+1}-x_t) \\
    &= -\eta_t\!\left(J_h(x_t)\wnabla f(x_t) 
    - \underbrace{\big(J_h(x_t)\wJ_h(x_t)^\top\big)\big(\wJ_h(x_t)\wJ_h(x_t)^\top\big)^{-1}}_{\neq I}
    \big(\wJ_h(x_t)\wnabla f(x_t) - Kh(x_t)\big)\right).
\end{talign*}
\end{small}
The mismatch between~$J_h(x_t)$ and~$\wJ_h(x_t)$ breaks the exact cancellation property of FO-FL, so the update no longer simplifies to~$-K h(x_t)$. 
As a result, the iterates are not guaranteed to converge to the feasible set~$\{x:h(x)=0\}$.

This limitation motivates the central question of our work:
\begin{quote}
\emph{Can we design zeroth-order methods that enforce constraint satisfaction despite relying on noisy gradient estimates (\cref{eq:two-point-estimator})?}
\end{quote}
In the next section, we show that a refined FL-based design yields a positive answer.

\section{Feedback-Linearization-Inspired Zeroth-order Algorithm}\label{sec:zeroth-order}
From the previous section, we know that simply substituting noisy gradient estimates into the FO-FL scheme does not guarantee feasibility.
A more careful design is required. In this section, we will present our algorithm along with the design insight and the theoretical guarantees on the constraint satisfaction. 
\subsection{Algorithm}

\noindent\textbf{Key idea.}
FL works by introducing a change of input that transforms nonlinear dynamics into a linear system.
In the ZO setting, however, the dynamics evolve under a \emph{noisy} gradient descent process (\cref{fig:control-perspective-constrained-opt-ZO}), which prevents the direct use of FO-FL.
To recover feasibility, we must rederive the FL scheme for this setting.

\noindent\textbf{Constraint dynamics.}
Consider the evolution of the constraints:
\begin{align}\label{eq:h-dynamics-ZO}
    \textstyle h(x_{t+1}) \!-\! h(x_t) \!\approx\! J_h(x_t)(x_{t+1}\!-\!x_t) \!=\! -\eta_t\!\left(J_h(x_t)\wnabla f(x_t) \!-\! J_h(x_t)\wJ_h(x_t)^\top \lambda_t\right).
\end{align}
If we choose
\begin{align}\label{eq:lambda-t-derivation}
    \textstyle \lambda_t = -\!\left(J_h(x_t)\wJ_h(x_t)^{\top}\right)^{-1}\!\big(J_h(x_t)\wnabla f(x_t)-K h(x_t)\big),
\end{align}
then~\cref{eq:h-dynamics-ZO} simplifies to
\vspace{-20pt}
\begin{align*}
    h(x_{t+1}) - h(x_t) \approx -\eta_t K h(x_t),
\end{align*}
which guarantees exponential decay of constraint violations.

\noindent\textbf{Challenge.} 
\cref{eq:lambda-t-derivation} requires access to the exact Jacobian~$J_h(x_t)$, which is not available in the ZO regime. 
At first glance, this seems to present a fundamental obstacle.

\noindent\textbf{Insight.}
A closer examination reveals that full access to $J_h(x_t)$ is unnecessary: it suffices to compute the Jacobian--vector products
$J_h(x_t)\,\wnabla f(x_t)$ and $J_h(x_t)\,\wJ_h(x_t)^\top$.
Equivalently, one only needs directional derivatives of $h$ along the directions $\wnabla f(x_t)$ and the rows of $\wJ_h(x_t)$, rather than the full Jacobian.
Crucially, these Jacobian–vector products can be efficiently approximated using standard two-point estimators as follows, thereby rendering the scheme implementable in the ZO setting:
\begin{equation}\label{eq:def-Gf-Gh}
\begin{split}
        \Gf & = \|\wnabla f(x_t)\|\frac{\left(h(x_t + r_2 v_f) - h(x_t - r_2 v_f)\right)}{2r_2}, ~~\textup{where } v_f = \frac{\wnabla f(x_t)}{\|\wnabla f(x_t)\|}\\
        [\Gh]_{:,i} &\!=\!  \|\wnabla h_i(x_t)\|\frac{\left(h(x_t \!+\! r_2 v_{h,i}) \!-\! h(x_t \!-\! r_2 v_{h,i})\right)}{2r_2}, ~\textup{where } v_{h,i} = \frac{\wnabla h_i(x_t)}{\|\wnabla h_i(x_t)\|},
        \end{split}
    \end{equation}
Here~$\wnabla h_i(x_t)$ is the transpose of the $i$-th row of $\widetilde{J}_h(x_t)$, i.e. $\wnabla h_i(x_t) \!= \![\widetilde{J}_h(x_t)]_{i,:}^\top$. The normalization factors $\|\wnabla f(x_t)\|$ and $\|\wnabla h_i(x_t)\|$ are introduced to convert directional finite differences along unit vectors into Jacobian--vector products along the original directions.
Indeed, since $v_f$ and $v_{h,i}$ are unit-norm directions, the centered difference $\frac{h(x_t + r_2 v) - h(x_t - r_2 v)}{2r_2}$ approximates $J_h(x_t)\,v$. Multiplying by the corresponding norm recovers
$J_h(x_t)\,\wnabla f(x_t)$ and $J_h(x_t)\,\wnabla h_i(x_t)$, respectively.
Thus, $\Gf$ and $\Gh$ are finite-difference estimates of the Jacobian--vector products
$J_h(x_t)\,\wnabla f(x_t)$ and $J_h(x_t)\,\wJ_h(x_t)^\top$. Then, we can set the Lagrangian multiplier $\lambda$ to be $\lambda_t = -(G_h)^{-1}(G_f - Kh(x_t))$, which leads to our zeroth-order feedback linearization algorithm (ZOFL). 
The full procedure is summarized in Algorithm~\ref{alg:equality-zeroth-order}.
\begin{algorithm}[htbp]
\caption{ZOFL (equality constraints)}
\label{alg:equality-zeroth-order}
\begin{algorithmic}[1]
\REQUIRE {Initial point $x_0$, algorithm hyperparameters: $T_G, T_B, r_1, r_2, K, \eta_t$}
\FOR{$t= 0,1,2,\dots, T_G$}
\STATE \textbf{Step 1}: Compute gradient estimation $\wnabla f(x_t), \widetilde{J}_h(x_t)$ using \cref{eq:two-point-estimator}.
\STATE \textbf{Step 2}: Given the gradient estimation $\wnabla f(x_t), \widetilde{J}_h(x_t)$, calculate $\lambda_t$ as follows
\begin{itemize}[left=0cm, itemsep=0pt, parsep=0pt, topsep=3pt, partopsep=0pt]
    \item Step 2.1: Compute $\Gf, \Gh$ that approximate $J_h(x_t)\wnabla f(x_t), J_h(x_t) \widetilde{J}_h(x_t)^\top $ as in \cref{eq:def-Gf-Gh}.
     
    \item Step 2.2: Set $\lambda_t = -\Gh^{-1}(\Gf-Kh(x_t))$
\end{itemize}
\STATE \textbf{Step 3}: Perform update $x_{t+1} = x_t - \eta_t\left(\wnabla f(x_t) + \wJ_h(x_t)^\top \lambda_t \right)$
\ENDFOR
\end{algorithmic}
\end{algorithm}
\subsection{Theoretical Guarantees on Constraint Satisfaction}
Building on the feedback–linearization perspective, the proposed ZOFL algorithm is designed to reduce constraint violations.
We now formalize this intuition by showing that, under mild regularity assumptions, the algorithm guarantees constraint satisfaction with high probability.

We begin by stating the assumptions on boundedness, smoothness, and conditioning that will be used throughout the analysis.
\begin{assump}[Bounded iterates]\label{assump:boundedness}
    The trajectory~$\{x_t\}$ of the algorithm lies inside a compact set~$\mathcal{D}\subset \bR^n$.
\end{assump}
\begin{assump}[Objective regularity]\label{assump:f-norms}
    The objective function~$f$ is differentiable on~$\mathcal{D}$ and satisfies
    \vspace{-10pt}
    \begin{align*}
        \|\nabla f(x)\| \le L_f, ~~\forall~x\in\mathcal{D}.
    \end{align*}
\end{assump}
\begin{assump}[Constraint regularity and conditioning]\label{assump:h-norms}
    The constraint function~$h$ is~$C^3$, i.e., three times continuously differentiable on~$\mathcal{D}$, and there exist constants $H,\overline L_h,\underline L_h,M,R>0$ such that for all $x\in\mathcal{D}$:
    \begin{align*}
       \|h(x)\|\!\le \!H,~~ \|J_h(x)\|\! \le\! \overline{L}_h, ~~\sigma_{\!\min}(J_h(x)) \!\ge\! \underline{L}_h,~~ \|D^2h(x)\| \!\le\! M, ~~\|D^3 h(x)\|_{\diag} \!\le\! R
    \end{align*}
    where $D^2h(x), D^3h(x)$ and $\|\cdot\|_{\diag}$ are defined as in Definition \ref{defi:directional-norm}.
\end{assump}
\begin{defi}[Second and Third-order directional derivative norm]\label{defi:directional-norm}
    Let \( f : \mathbb{R}^n \to \mathbb{R}^m \) be \( C^3 \). Then:

\begin{itemize}[itemsep=-10pt,topsep = 0pt,left=0pt]
    \item The second derivative \( D^2 f(x) \) is a symmetric bilinear map:
    \begin{talign*}
    D^2 f(x) : \mathbb{R}^n \times \mathbb{R}^n \to \mathbb{R}^m, \quad
    D^2 f(x)[u, v] := \left. \frac{\partial^2}{\partial s \partial t} f(x + s u + t v) \right|_{s=t=0}.
    \end{talign*}

    \item The third derivative \( D^3 f(x) \) is a symmetric trilinear map:
    \begin{talign*}
    D^3 f(x) : \mathbb{R}^n \!\times\! \mathbb{R}^n \!\times\! \mathbb{R}^n \!\to\! \mathbb{R}^m, ~
    D^3 f(x)[u, v, w] \!:=\! \left. \frac{\partial^3}{\partial s \partial t \partial r} f(x\! +\! s u \!+\! t v \!+\! r w) \right|_{s=t=r=0}\!\!.
    \end{talign*}
\end{itemize}
We define the diagonal norms of \( D^2 f(x) \) and \( D^3 f(x) \) as follows:
\begin{talign*}
\|D^2 f(x)\|_{\mathrm{diag}} \!:= \!\sup_{\|u\| \!=\! 1} \!\| D^2 f(x)[u, u] \|, ~ 
\|D^3 f(x)\|_{\mathrm{diag}} \!:=\! \sup_{\|u\| = 1} \| D^3 f(x)[u, u, u] \|.
\end{talign*}
\end{defi}

With these assumptions in place, we can formally state our main guarantee on constraint satisfaction.
\begin{theorem}\label{thm:zeroth-order-eq}
Suppose Assumptions~\ref{assump:boundedness}--\ref{assump:h-norms} hold and~$K\succ 0$.
Run \cref{alg:equality-zeroth-order} with $u_i$'s n \eqref{eq:two-point-estimator} drawn i.i.d. from the unit sphere.  
Fix~$\delta\in(0,1)$ and horizon~$T_G\in\mathbb N$. 
If the batch size~$T_B$ and probe radii~$r_1,r_2$ satisfy (cf. Appendix Lemma \ref{lemma:bound-lmin-JwJ})
\begin{talign*}
T_B \ge 32\!\left(m \log\!\Big(\frac{192\,n\,\overline L_h^2}{\underline L_h^2}\Big) + \log\!\Big(\frac{T_G}{\delta}\Big)\right),\qquad
r_1 \le \frac{\underline L_h}{8\sqrt{2\,\overline L_h\,R}}, 
\qquad
r_2 \le\frac{\underline L_h}{8\sqrt{2\,n\,\overline L_h\,R}},
\end{talign*}
and the stepsizes obey the stability condition~$0<\eta_t\,\lambda_{\min}(K)<1$ for all~$t$, then with probability at least~$1-\delta$, for all $t=1,\dots,T_G$,
    \begin{talign*}
\|h(x_t)]\|
\le \prod_{s=0}^{t-1}(1-\eta_s\lambda_{\min}(K))\|h(x_0)\| 
+ C_2 r_2^2 \sum_{s=0}^{t-1} \prod_{\tau=s+1}^{t-1}(1-\eta_\tau\lambda_{\min}(K))\,\eta_s  \\+ C_1 \sum_{s=0}^{t-1} \prod_{\tau=s+1}^{t-1}(1-\eta_\tau\lambda_{\min}(K))\,\eta_s^2,
\end{talign*}
where
\begin{talign*}
C_1 \;=\; M\!\left(nL_f + \frac{64\,n\,\overline L_h\, (nL_f\overline L_h + \|K\|\,H)}{\underline L_h^2} \right), 
\qquad
C_2 \;=\; nR\!\left(L_f + \frac{64\,\overline L_h\, (nL_f\overline L_h + \|K\|\,H)}{\underline L_h^2}\right).
\end{talign*}
In particular, for a constant step $\eta_t=\eta$,
\begin{equation}\label{eq:bound-constant-stepsize}
\textstyle \|h(x_t)\| 
\;\le\; (1-\eta\lambda_{\min}(K))^t\,\|h(x_0)\|
~+~ \frac{C_2 r_2^2}{\lambda_{\min}(K)}
~+~ \frac{C_1 \eta}{\lambda_{\min}(K)}.
\end{equation}
For a diminishing step $\eta_t=\eta/\sqrt{t}$,
\begin{equation}\label{eq:bound-diminishing-stepsize}
\textstyle\|h(x_t)\|
\;\le\; e^{-\eta\lambda_{\min}(K)(\sqrt{t}-1)}\,\|h(x_0)\|
~+~ \frac{2e\,C_2 r_2^2}{\lambda_{\min}(K)}
~+~ \frac{C_1 \eta\,e^{2-\eta\sqrt{t}}}{\lambda_{\min}(K)}
~+~ \frac{2e\,C_1\eta}{\lambda_{\min}(K)\sqrt{t+1}}.
\end{equation}
\end{theorem}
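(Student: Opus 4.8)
The plan is to reduce everything to a single one-step inequality of the form $\|h(x_{t+1})\| \le (1-\eta_t\lambda_{\min}(K))\,\|h(x_t)\| + C_2 r_2^2\,\eta_t + C_1\,\eta_t^2$ and then unroll this linear recursion. Once the recursion is in hand, the general bound is just the closed form of the unrolled sum, and the two corollaries follow by evaluating the resulting products and geometric-type sums: for the constant step one uses $\prod_{\tau=s+1}^{t-1}(1-\eta\lambda_{\min}(K)) = (1-\eta\lambda_{\min}(K))^{t-1-s}$ together with $\sum_j \rho^j \le 1/(1-\rho)$, while for $\eta_t=\eta/\sqrt t$ one bounds the products by $\exp(-\eta\lambda_{\min}(K)\sum_\tau \tau^{-1/2})$, estimates $\sum_{\tau\le t}\tau^{-1/2}\approx 2\sqrt t$, and splits the error sum into an early and a late block to produce the two separate $\eta$-terms.

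The heart of the argument is the one-step recursion, obtained by Taylor expanding $h$ along the update $x_{t+1}-x_t = -\eta_t s_t$, where $s_t := \wnabla f(x_t)+\wJ_h(x_t)^\top\lambda_t$. Writing $h(x_{t+1}) = h(x_t) + J_h(x_t)(x_{t+1}-x_t) + \mathrm{Rem}_t$ with $\|\mathrm{Rem}_t\| \le \tfrac12 M\|x_{t+1}-x_t\|^2$ (from $\|D^2h\|\le M$ in \cref{assump:h-norms}), the first-order term equals $-\eta_t(A + B\lambda_t)$, where $A := J_h(x_t)\wnabla f(x_t)$ and $B := J_h(x_t)\wJ_h(x_t)^\top$ are exactly the Jacobian--vector products that $\Gf,\Gh$ approximate. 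Substituting $\lambda_t = -\Gh^{-1}(\Gf - Kh(x_t))$ and adding and subtracting yields the exact identity $A + B\lambda_t - Kh(x_t) = (A-\Gf) + (\Gh - B)\,\Gh^{-1}\bigl(\Gf - Kh(x_t)\bigr)$. This is the crucial ``approximate feedback linearization'' step: the controller cancels against the same estimated directions it uses in the update, so that the noise in $\wnabla f,\wJ_h$ affects only where the iterates go, not feasibility, and the only surviving terms are the finite-difference biases $A-\Gf$ and $\Gh-B$. Because the estimators in \cref{eq:def-Gf-Gh} are central differences of a $C^3$ function, each bias is $O(r_2^2)$ with constant governed by $R$ (via $\|D^3h\|_{\diag}$), which produces the $C_2 r_2^2$ term; the remainder $\mathrm{Rem}_t$, governed by $M$ and the step magnitude, produces the $C_1\eta_t^2$ term. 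Collecting, $h(x_{t+1}) = (I-\eta_t K)h(x_t) + O(\eta_t r_2^2) + O(\eta_t^2)$, and the spectral inequality $\|I-\eta_t K\|\le 1-\eta_t\lambda_{\min}(K)$ under the stated stability condition gives the recursion.

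To make the constants explicit and uniform along the trajectory, I would first bound the ingredients feeding $C_1,C_2$: $\|\wnabla f(x_t)\|\lesssim nL_f$ and $\|\wJ_h(x_t)\|\lesssim \overline L_h$ for the two-point estimators, $\|\Gf\|\lesssim \overline L_h\,nL_f$, and $\|\lambda_t\|\le (\|\Gf\|+\|K\|H)/\sigma_{\min}(\Gh)$, which in turn controls $\|s_t\|$ and hence the step magnitude appearing in both constants. The single genuinely probabilistic ingredient is the lower bound on $\sigma_{\min}(\Gh)$: since $\Gh \approx B = J_h(x_t)\wJ_h(x_t)^\top$ and $J_h(x_t)J_h(x_t)^\top \succeq \underline L_h^2 I$, it suffices that $\wJ_h(x_t)$ concentrate around $J_h(x_t)$ well enough that $\sigma_{\min}(B)\gtrsim \underline L_h^2$; this is precisely \cref{lemma:bound-lmin-JwJ}, whose batch-size requirement drives the stated condition on $T_B$, and a union bound over the $T_G$ iterations contributes the $\log(T_G/\delta)$ term.

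The main obstacle is the conditioning of $\Gh$: unlike the deterministic cancellation in FO-FL, here the invertibility and norm control of $\Gh^{-1}$ hold only with high probability and only when $\wJ_h$ is sufficiently accurate, so the entire feasibility guarantee hinges on the concentration bound of \cref{lemma:bound-lmin-JwJ} and on keeping $r_1,r_2$ small enough that the finite-difference bias does not overwhelm $\underline L_h^2$. The remaining work---propagating these bounds into explicit $C_1,C_2$ and carrying out the diminishing-step summation---is routine bookkeeping.
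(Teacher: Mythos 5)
Your proposal is correct and follows essentially the same route as the paper's proof: Taylor-expand $h$ along the update, exploit the exact cancellation identity so that only the finite-difference biases $\Gf - J_h(x_t)\wnabla f(x_t)$ and $\Gh - J_h(x_t)\wJ_h(x_t)^\top$ (each $O(r_2^2)$ by $C^3$ smoothness) and the $O(\eta_t^2)$ remainder survive, control $\sigma_{\min}(\Gh)$ via the concentration result of Lemma~\ref{lemma:bound-lmin-JwJ} with a union bound over $T_G$ iterations, and unroll the resulting linear recursion (with the early/late split for the diminishing step, as in Lemmas~\ref{lemma:auxiliary-1}--\ref{lemma:auxiliary-3}). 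The only cosmetic difference is that the paper routes the cancellation through an auxiliary multiplier $\lambda_t^\star = -\bigl(J_h(x_t)\wJ_h(x_t)^\top\bigr)^{-1}\bigl(J_h(x_t)\wnabla f(x_t)-Kh(x_t)\bigr)$ and bounds $\Delta_t = J_h(x_t)\wJ_h(x_t)^\top(\lambda_t^\star-\lambda_t)$ in Lemma~\ref{lemma:lambda-error}, whereas you write the same identity by adding and subtracting directly; the two are algebraically identical.
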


    

\begin{rmk}[Interpretation of Constraint Violation Bound]\label{rmk:interpretation} We now unpack the meaning of the bound in \cref{eq:bound-constant-stepsize}.
The first term, $(1-\eta\lmin(K))^t\|h(x_0)\|$, decays exponentially in~$t$ to zero.
This reflects the core effect of the FL design: in the absence of estimation or discretization errors, the constraint dynamics reduce to a simple stable linear system, driving violations to zero at a geometric rate.
In this sense, ZOFL inherits the strong feasibility guarantees of first-order FL.

The second term,~$\frac{C_2r_2^2}{\lmin(K)}\!\sim\! O(r_2^2) $, arises from replacing the exact Jacobian-vector products~$J_h(x_t)\nabla f(x_t)$ and~$J_h(x_t)J_h(x_t)^\top$ with their ZO approximations~$\Gf,\Gh$.
Because these approximations are based on finite-difference probing with radius~$r_2$, the residual scales quadratically in~$r_2$.
This error is fully controllable: if function evaluations of~$f,h$ are exact, one can make this term arbitrarily small by shrinking~$r_2$, up to the limits of numerical precision.
Thus, this term does not represent a fundamental barrier but rather a trade-off between accuracy and evaluation cost.

The third term,~$\frac{C_1\eta}{\lmin(K)}\sim O(\eta)$, comes from higher-order terms in the Taylor expansion of the constraint dynamics.
Unlike the approximation error, this residual is intrinsic to the Euler discretization used in ZOFL, where we approximate $h(x_{t+1}) - h(x_t)$ with the first order Taylor expansion $J_h(x_t)(x_{t+1} - x_t)$ (see \eqref{eq:h-dynamics-ZO}). Thus a fixed step size~$\eta$ produces a non-vanishing bias. This is the main bottleneck for achieving exact feasibility under constant step sizes.

To mitigate this discretization bias, one can use a diminishing schedule as in \cref{eq:bound-diminishing-stepsize}.
In this case, the residual terms vanish asymptotically, and constraint violations eventually disappear. 
The trade-off is that the ideal contraction term slows down: instead of exponential decay, the dominant term becomes~$e^{-\eta\lambda_{\min}(K)(\sqrt{t}-1)}\|h(x_0)\|$, which decreases subexponentially in $t$. 
This mirrors a common theme in stochastic optimization: stronger asymptotic guarantees are possible, but at the cost of slower transient progress.

In summary, the bound neatly separates three effects: (i) exponential contraction from FL, (ii) a controllable~$O(r_2^2)$ error from zeroth-order approximation, and (iii) an~$O(\eta)$ residual from discretization. 
Constant stepsizes yield fast initial reduction but leave a small feasibility gap, while diminishing stepsizes remove the gap but slow down the rate. 
This trade-off will guide the practical choice of stepsize and probing radius.

We also note that the batch size $T_B$ for the two-point estimator scales only with the number of constraints, $T_B \sim \widetilde{O}(m)$. Consequently, our algorithm is particularly efficient when the number of constraints is smaller than the number of variables, requiring only a small batch size at each iteration. 
\end{rmk}

\subsection{Global Convergence}\label{sec:global-convergence}

While Theorem \ref{thm:zeroth-order-eq} establishes guarantees for constraint satisfaction, it does not address the convergence to a KKT point. In this section, we will establish global convergence guarantees for the equality constrained setting \eqref{eq:optimization-eq-contraint}. We will make the following additional assumption
\begin{assump}\label{assump:f-norms-2}
 The objective function is $M_f$-smooth, i.e.,  $\|\nabla^2 f(x)\|\le M_f$ and its third order derivative satisfies $\|D^3f(x)\|_\diag\le R$.
\end{assump}
Further in order for the analysis to carry through, we add an additional line of code between Step 2.1 and 2.2 in Algorithm \ref{alg:equality-zeroth-order} to check if $\sigma_{\min}(G_h) \ge \sigma >0$, where $\sigma$ is some constant that we choose such that $\sigma > \frac{\underline L_h}{64}$, and if this condition is not met, we reject the sampled $u_i$'s and redo the sampling until the condition is met.
\begin{theorem}\label{thm:global-convergence}
    Under Assumption \ref{assump:boundedness}, \ref{assump:f-norms}, \ref{assump:h-norms}, and \ref{assump:f-norms-2}, by running the above described modified Algorithm \ref{alg:equality-zeroth-order}, for $\eta_t \!=\! \eta \!\le\! \frac{1}{6n(M_f+\tau M)}$ where $\tau \!=\! 64(T_BL_f+H)\overline{L}_h\underline{L}_h^{-2}\frac{\|K\|}{\lmin(K)}$, we have that
    \begin{talign*}
        \liminf_{t\to +\infty} \left( \|\nabla f(x_t) + J_h(x_t)^\top \lambda^\star(x_t)\|^2 + \|h(x_t)\|_1\right) \le {\frac{64 T_B\epsilon}{\eta n} }, \end{talign*}
    where $\lambda^\star(x_t) := (J_h(x_t)J_h(x_t)^\top)^{-1} J_h(x_t)\nabla f(x_t), \epsilon := n\overline{L}_h\underline{L}_h^{-2}Rr_1^2 + C_2r_2^2$ ($C_2$ defined as in Theorem \ref{thm:zeroth-order-eq}).
\end{theorem}
The proof of the theorem is in Appendix~\ref{apdx:global-convergence}. We would like to remark that the stepsize $\eta$ in the theorem is generally too small and the algorithm converges too slowly. Thus in practice, it is better to adopt linesearch methods to determine the stepsize adaptively. We would like to leave it as future work about more efficient linesearch design.
\section{Extension to Inequality-constrained Setting}\label{sec:ineq}

So far we have focused on equality constraints of the form as in \cref{eq:optimization-eq-contraint}.
We now consider the more general problem with inequality constraints:
\vspace{-3pt}
\begin{equation}\label{eq:optimization-ineq-contraint}
 \textstyle   \min_{x\in \bR^n}f(x) \quad \text{s.t. }h(x)\leq 0.
\end{equation}
The KKT conditions are
\begin{equation}\label{eq:KKT-ineq-constraint}
~~ -\nabla f(x) - J_h(x)^\top\lambda = 0,\quad 
    h(x) \le 0,\quad 
    \lambda \ge 0,\quad 
    \lambda^\top h(x) = 0.
\end{equation}

\noindent{\textbf{First-order FL algorithm.}} We can again view this as a control problem (Fig. \ref{fig:control-perspective-constrained-opt}), with dynamics
\begin{equation}\label{eq:ineq-constraint-dynamic}
    \begin{split}
        \textstyle x_{t+1} - x_t = -\eta_t\left(\nabla f(x_t) + J_h(x_t)^\top \lambda_t\right), \quad 
    y_t = h(x_t),\quad 
    \lambda_t \ge 0.
    \end{split}
\end{equation}

Compared with the equality case, the difficulty lies in enforcing the non-negativity of multipliers and the complementary slackness condition~$\lambda^\top h(x)=0$. 
In~\citep{zhang2025constrainedoptimizationcontrolperspective}, this is achieved by designing a more intricate FL controller:

\begin{tcolorbox}[title={\small FO-FL for Inequality-Constrained Optimization}]
\vspace{-10pt}
\begin{equation}\label{eq:feedback-linearization-ineq}
\begin{split}
    &\textstyle x_{t+1} - x_t = -\eta_t\!\left(\nabla f(x_t) + J_h(x_t)^\top \lambda_t\right),\\
    & \textstyle \lambda_t = \argmin_{\lambda \ge 0}\Big\{
        \tfrac{1}{2}\lambda^\top J_h(x_t)J_h(x_t)^\top \lambda 
        \!+\! \lambda^\top\!\big(J_h(x_t)\nabla f(x_t) \!-\! K h(x_t)\big)
    \!\Big\}.
\end{split}
\end{equation}
\end{tcolorbox}

Unlike the equality-constrained case in \cref{eq:feedback-linearization}, where $\lambda_t$ admits a closed-form expression, here~$\lambda_t$ is defined implicitly through a quadratic program.
This introduces nonsmooth trajectories and complicates the extension to ZO settings.

\noindent\textbf{Naive zeroth-order attempt.}~
In the ZO regime, the dynamics become
\begin{equation*}
    x_{t+1} - x_t = -\eta_t\big(\wnabla f(x_t) + \wJ_h(x_t)^\top \lambda_t\big).
\end{equation*}
A natural extension of \cref{eq:feedback-linearization-ineq} is to replace gradients with their estimates, which gives:
\begin{align}\label{eq:lambda-intuitive}
  \textstyle  \lambda_t = \argmin_{\lambda \ge 0}\Big\{
        \tfrac{1}{2}\lambda^\top J_h(x_t)\wJ_h(x_t)^\top \lambda 
        + \lambda^\top\!\big(J_h(x_t)\wnabla f(x_t) - K h(x_t)\big)
    \Big\}.
\end{align}
However, this quadratic form is not guaranteed to be symmetric positive definite (since~$J_h(x_t)\wJ_h(x_t)^\top$ need not be symmetric), and the resulting optimization problem may be ill-posed.

\noindent\textbf{Refined derivation.}~
The key is to return to the KKT conditions of \cref{eq:feedback-linearization-ineq}.
For the exact (first-order) case,~$\lambda_t$ and an auxiliary slack variable~$s$ must satisfy
\begin{equation*}
    J_h(x_t)J_h(x_t)^\top \lambda_t + J_h(x_t)\nabla f(x_t) = K h(x_t) + s, 
\quad s^\top \lambda_t = 0, \quad s \ge 0, \quad \lambda_t \ge 0.
\end{equation*}
In the zeroth-order regime, we mirror this structure but replace exact terms with their estimators:
\begin{align}\label{eq:lambda-t-intuition-KKT}
    J_h(x_t)\wJ_h(x_t)^\top \lambda_t + J_h(x_t)\wnabla f(x_t) = K h(x_t) + s, 
    ~~ s^\top \lambda_t = 0, ~~ s \ge 0, ~~ \lambda_t \ge 0.
\end{align}
This system defines~$\lambda_t$ without requiring~$J_h(x_t)\wJ_h(x_t)^\top$ to be symmetric positive definite. 
Our analysis confirms that \cref{eq:lambda-t-intuition-KKT} provides the correct formulation for ensuring feasibility.

Moreover,  as in the equality-constrained case, full Jacobians are not required.
It suffices to estimate the products
\vspace{-5pt}
\begin{equation*}
\textstyle G_f \approx J_h(x_t)\wnabla f(x_t), 
\qquad 
G_h \approx J_h(x_t)\wJ_h(x_t)^\top,
\end{equation*}
which can be obtained from the two-point estimators in \cref{eq:def-Gf-Gh}.
The resulting ZOFL scheme for inequality constraints is summarized below.


\begin{algorithm}[htbp]
\caption{ZOFL (inequality constraints)}
\label{alg:inequality-zeroth-order}
\begin{algorithmic}[1]
\REQUIRE {Initial point $x_0$, algorithm hyperparameters: $T_G, T_B, r_1, r_2, K, \eta$}
\FOR{$t= 0,1,2,\dots, T_G$}
\STATE \textbf{Step 1}: Compute gradient estimation $\wnabla f(x_t), \widetilde{J}_h(x_t)$ using \cref{eq:two-point-estimator}.
\STATE \textbf{Step 2}: Given the gradient estimation $\wnabla f(x_t), \widetilde{J}_h(x_t)$, calculate $\lambda_t$ as follows
\begin{itemize}[left=0cm, itemsep=0pt, parsep=0pt, topsep=3pt, partopsep=0pt]
    \item Step 2.1: Compute $\Gf, \Gh$ that approximate $J_h(x_t)\wnabla f(x_t), J_h(x_t) \widetilde{J}_h(x_t)^\top $ as in \cref{eq:def-Gf-Gh}.
    \item Step 2.2: Solve the following equations:
\begin{equation*}
    \begin{split}
        \textstyle \Gh\lambda + \Gf = Kh(x_t) + s,~~~s^\top \lambda = 0,~~~s\ge 0, ~~~\lambda\ge0
    \end{split}
    \end{equation*}
    Set $\lambda_t$ to be the solution for $\lambda$.
\end{itemize}
\STATE \textbf{Step 3}: Perform update $x_{t+1} = x_t - \eta\left(\wnabla f(x_t) + \wJ_h(x_t)^\top \lambda_t \right)$
\ENDFOR
\end{algorithmic}
\end{algorithm}

Our following theoretical analysis further validates \cref{alg:inequality-zeroth-order}'s ability to guarantee constraint satisfaction.

\noindent\textbf{Theoretical guarantees.}~
We now state the main feasibility result.
The proof follows the same high-level structure as \cref{thm:zeroth-order-eq} but requires sharper bounds on the error terms due to the nonsmooth projection step.

\begin{theorem}[Feasibility with Inequality Constraints]\label{thm:zeroth-order-ineq}
    Under Assumption \ref{assump:boundedness}, \ref{assump:f-norms} and \ref{assump:h-norms}, suppose~$T_B$ and~$r_1,r_2$ are chosen as in \cref{thm:zeroth-order-eq}.
    Then with probability at least~$1-\delta$, the ZOFL algorithm for inequality constraints (\cref{alg:inequality-zeroth-order}) satisfies
    \begin{talign*}
\|[h(x_t)]_+\|
\le \prod_{s=0}^{t-1}(1\!-\!\eta_s\lambda_{\min}(K))\|[h(x_0)]_+\| 
+ C_2 r_2^2 \!\sum_{s=0}^{t-1} \!\prod_{\tau=s\!+\!1}^{t-1}\!(1\!-\!\eta_\tau\lambda_{\min}(K))\,\eta_s  \\+ C_1 \!\sum_{s=0}^{t-1}\! \prod_{\tau=s\!+\!1}^{t-1}\!(1\!-\!\eta_\tau\lambda_{\min}(K))\,\eta_s^2,
\end{talign*}
for all~$t=1,\dots,T_G$, where $[h(x)]_+=\max\{h(x),0\}$ denotes the positive part of the constraint. 
Here the constants are
\[\textstyle 
C_1 = M\!\left(nL_f + \tfrac{64n\overline{L}_h(nL_f\overline{L}_h+\|K\|H)}{\underline{L}_h^2}\right), 
~~C_2 = n^2\overline{L}_h^2R\!\left(\tfrac{4096 n \overline{L}_h(L_f\overline{L}_h+\|K\|H)}{\underline{L}_h^4} + \tfrac{64L_f}{\underline{L}_h^2}\right).
\]
In particular, for constant step $\eta_t=\eta$,
\[
\|[h(x_t)]_+\| \le (1-\eta\lambda_{\min}(K))^t\|[h(x_0)]_+\| + \tfrac{C_2 r_2^2}{\lambda_{\min}(K)} + \tfrac{C_1\eta}{\lambda_{\min}(K)}.
\]
For diminishing step $\eta_t=\eta/\sqrt{t}$, the bound improves asymptotically as in Theorem~\ref{thm:zeroth-order-eq}.
\end{theorem}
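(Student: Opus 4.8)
The plan is to mirror the one-step-contraction argument behind \cref{thm:zeroth-order-eq}, replacing the scalar recursion on $\|h(x_t)\|$ by a recursion on the infeasibility measure $\|[h(x_t)]_+\|$, and then to unroll it exactly as in the equality case to obtain the stated product-sum bound together with its constant- and diminishing-stepsize specializations. The conditioning and concentration facts I would reuse verbatim from the equality analysis: with fresh directions at each step and $T_B$ as prescribed, a union bound over $t=1,\dots,T_G$ guarantees, with probability at least $1-\delta$, that $\Gh$ is invertible with $\sigma_{\min}(\Gh)$ bounded below on the order of $\underline{L}_h^2$, and that the two-point estimators achieve the $O(r_2^2)$ accuracy for the Jacobian--vector products $J_h(x_t)\wnabla f(x_t)$ and $J_h(x_t)\wJ_h(x_t)^\top$ (this is where $r_1,r_2$ and the $C^3$ bound $R$ of Assumption~\ref{assump:h-norms} enter). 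I would condition on this event throughout.

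First I would expand the constraint evolution by Taylor's theorem, $h(x_{t+1}) - h(x_t) = J_h(x_t)(x_{t+1}-x_t) + \rho_t$ with $\|\rho_t\| \le \tfrac12 M\|x_{t+1}-x_t\|^2 = O(\eta_t^2)$, which contributes the $C_1\eta_s^2$ term. Substituting the update $x_{t+1}-x_t = -\eta_t(\wnabla f(x_t) + \wJ_h(x_t)^\top\lambda_t)$ and replacing the exact products by $\Gf,\Gh$ (incurring the $O(r_2^2)$ error that contributes $C_2 r_2^2$) yields
\begin{equation*}
h(x_{t+1}) = h(x_t) - \eta_t\big(\Gh\lambda_t + \Gf\big) + e_t, \qquad \|e_t\| = O(\eta_t^2) + O(\eta_t r_2^2).
\end{equation*}
I would then invoke the defining complementarity system of Step~2.2, namely $\Gh\lambda_t + \Gf = Kh(x_t) + s_t$ with $s_t\ge 0$, $\lambda_t\ge 0$, $s_t^\top\lambda_t = 0$, to rewrite this as $h(x_{t+1}) = (I-\eta_t K)h(x_t) - \eta_t s_t + e_t$.

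The genuinely new step relative to the equality case is to pass from this identity to a contraction of the \emph{positive part}. I would use $\|[a]_+\|^2 = \langle [a]_+, a\rangle$ with $a = h(x_{t+1})$: since $[h(x_{t+1})]_+\ge 0$ and $s_t\ge 0$, the slack contributes $-\eta_t\langle [h(x_{t+1})]_+, s_t\rangle \le 0$, so the complementarity slack can only \emph{help} feasibility and may be discarded. The remaining term $\langle [h(x_{t+1})]_+,(I-\eta_t K)h(x_t)\rangle$ must be controlled by $(1-\eta_t\lambda_{\min}(K))\,\|[h(x_{t+1})]_+\|\,\|[h(x_t)]_+\|$; here I would split $h(x_t)=[h(x_t)]_+-[-h(x_t)]_+$, drop the (nonnegative-against-nonnegative) contribution of the negative part, and exploit the structure of $K$ so that $[(I-\eta_t K)\,\cdot\,]_+$ acts as the contraction factor $1-\eta_t\lambda_{\min}(K)$ on $[h(x_t)]_+$ under the stability condition $0<\eta_t\lambda_{\min}(K)<1$. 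Dividing by $\|[h(x_{t+1})]_+\|$ and absorbing $\langle[h(x_{t+1})]_+,e_t\rangle\le\|e_t\|\,\|[h(x_{t+1})]_+\|$ gives the one-step recursion $\|[h(x_{t+1})]_+\| \le (1-\eta_t\lambda_{\min}(K))\,\|[h(x_t)]_+\| + \|e_t\|$, which unrolls into the claimed bound; the constant- and diminishing-stepsize corollaries follow from the same geometric/telescoping estimates as in \cref{thm:zeroth-order-eq}.

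The main obstacle I anticipate lives entirely in the error term $e_t$ and the multiplier bound. Unlike the equality case, $\lambda_t$ has no closed form: it solves the linear complementarity system~\cref{eq:lambda-t-intuition-KKT}, so I must bound $\|\lambda_t\|$ — needed both for the Taylor remainder $\|\rho_t\|=O(\eta_t^2)$ and for the size of the $O(r_2^2)$ perturbation — through a sensitivity analysis of that system rather than a single $\|\Gh^{-1}\|$ estimate. Because the solution map depends on $(\Gh,\Gf)$ in a nonsmooth, piecewise-linear (active-set-dependent) way, its reaction to the $O(r_2^2)$ estimator error is governed by a higher power of the conditioning, which is precisely what inflates $C_2$ from the $O(\underline{L}_h^{-2})$ scaling of \cref{thm:zeroth-order-eq} to the $O(\underline{L}_h^{-4})$, $O(n^2\overline{L}_h^2)$ scaling seen here. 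I expect this quantitatively sharp bound on how the LCP/active-set solution responds to perturbed inputs to be the hardest and most delicate part; the rest of the argument parallels the equality-constrained proof.
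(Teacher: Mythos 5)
Your proposal follows the paper's skeleton (Taylor expansion of $h$, use of the nonnegative complementarity slack to pass to the positive part, then unrolling the same recursion), but it diverges at the one step that carries all the difficulty, and your route is in fact simpler. The paper introduces an \emph{auxiliary} multiplier $\lambda_t^\star$ solving the LCP \cref{eq:lambda-t-intuition-KKT} with the exact products $J_h(x_t)\wJ_h(x_t)^\top$ and $J_h(x_t)\wnabla f(x_t)$, writes the error as $\Delta_t = J_h(x_t)\wJ_h(x_t)^\top(\lambda_t^\star-\lambda_t)$, and must then bound $\|\lambda_t^\star-\lambda_t\|$ by a delicate homotopy argument (Lemma~\ref{lemma:lambda-error-ineq}): interpolate the data $(A(\alpha),b(\alpha))$ between exact and estimated values, partition $[0,1]$ into intervals where the active set is constant, and control each piecewise-linear segment via interlacing of principal submatrices; this LCP sensitivity analysis is precisely what inflates the constant to $C_2 = O(\underline{L}_h^{-4})$. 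Your decomposition never forms $\lambda_t^\star$: you invoke the algorithm's own identity $\Gh\lambda_t+\Gf = Kh(x_t)+s_t$ and push the zeroth-order error into $(J_h(x_t)\wnabla f(x_t)-\Gf)+(J_h(x_t)\wJ_h(x_t)^\top-\Gh)\lambda_t$, which needs only the Taylor bounds $\|J_h\wnabla f-\Gf\|\le nL_fRr_2^2$, $\|J_h\wJ_h^\top-\Gh\|\le n\overline{L}_hRr_2^2$ together with a bound on $\|\lambda_t\|$ itself; the latter is exactly the paper's Lemma~\ref{lemma:auxiliary-stepsize-ineq} (active-set restriction plus interlacing), not a perturbation analysis of the solution map. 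Your route therefore bypasses Lemma~\ref{lemma:lambda-error-ineq} entirely and yields the equality-case constant $C_2 = nR\bigl(L_f + 64\overline{L}_h(nL_f\overline{L}_h+\|K\|H)/\underline{L}_h^2\bigr)$, which is smaller than the one stated in the theorem, so the claimed bound follows a fortiori. This also means your final paragraph misdiagnoses the difficulty: the "reaction of the LCP/active-set solution to the $O(r_2^2)$ estimator error" that you expect to be the hardest part is an artifact of the paper's decomposition and is simply not needed in yours; only the static bound on $\|\lambda_t\|$ is.

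One caveat, shared equally with the paper: the contraction step $\|[(I-\eta_t K)h(x_t)]_+\|\le(1-\eta_t\lmin(K))\|[h(x_t)]_+\|$ (your "exploit the structure of $K$", the paper's unexplained "Since $s^\star\ge 0$, we have that\dots") is not true for general symmetric $K\succ 0$: with $K=\left(\begin{smallmatrix}1 & 0.9\\ 0.9 & 1\end{smallmatrix}\right)$, $\eta_t=1$ (so $\eta_t\lmin(K)=0.1$) and $h(x_t)=(1,-2)^\top$, one gets $\|[(I-\eta_t K)h(x_t)]_+\|=1.8>0.9$. It does hold, e.g., when $K$ is diagonal or $I-\eta_t K$ is entrywise nonnegative. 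Since the paper asserts the identical inequality without justification, this is a shared gap rather than a defect of your argument relative to the paper's proof.
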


The structure of the bound mirrors the equality-constrained case: exponential contraction toward feasibility, plus two residual terms accounting for zeroth-order approximation and discretization. 
The detailed interpretation in Remark \ref{rmk:interpretation} applies here as well, with the caveat that violations are measured via~$[h(x_t)]_+$ rather than~$h(x_t)$.
\section{Exploring midpoint methods for zeroth-order optimization}
\begin{algorithm}[htbp]
\caption{ZOFL-midpoint (equality constraints)}
\label{alg:equality-zeroth-order-midpoint}
\begin{algorithmic}[1]
\REQUIRE {Initial point $x_0$, algorithm hyperparameters: $T_G, T_B, r_1, r_2, K, \eta$}
\FOR{$t= 0,1,2,\dots, T_G$}
\STATE \textbf{Step 1}: Compute gradient estimation $\wnabla f(x_t), \widetilde{J}_h(x_t)$ using \cref{eq:two-point-estimator}.
\STATE \textbf{Step 2}: Given the gradient estimation $\wnabla f(x_t), \widetilde{J}_h(x_t)$, calculate $\lambda_t$ as follows
\begin{itemize}[left=0cm, itemsep=0pt, parsep=0pt, topsep=3pt, partopsep=0pt]
    \item Step 2.1: Compute $\Gf, \Gh$ that approximate $J_h(x_t)\wnabla f(x_t), J_h(x_t) \widetilde{J}_h(x_t)^\top $ as in \cref{eq:def-Gf-Gh}.
    \item Step 2.2: Set $\lambda_t = -\Gh^{-1}(\Gf-Kh(x_t))$
\end{itemize}
\STATE \textbf{Step 3}: Perform update $\xmid = x_t - \frac{\eta}{2}\left(\wnabla f(x_t) + \wJ_h(x_t)^\top \lambda_t \right)$
\STATE \textbf{Step 4}: Calculate $\wnabla f(\xmid), \widetilde{J}_h(\xmid)$ according to \cref{eq:two-point-estimator} (replace $x$ with $\xmid$) \emph{using the same $u_i$'s as in Step 1}
\STATE \textbf{Step 5}: 
\begin{itemize}[left=0cm, itemsep=0pt, parsep=0pt, topsep=3pt, partopsep=0pt]
    \item Step 5.1: Recalculate $\Gf, \Gh$ that approximate $J_h(\xmid)\wnabla f(\xmid), J_h(\xmid) \widetilde{J}_h(\xmid)^\top $ according to \cref{eq:def-Gf-Gh} (replace $x$ with $\xmid$)
    \item Step 5.2: Set $\lambda_t = -\Gh^{-1}(\Gf-Kh(x_t))$.
\end{itemize}
\STATE \textbf{Step 6}: Perform update $x_{t+1} = x_t - \frac{\eta}{2}\left(\wnabla f(\xmid) + \wJ_h(\xmid)^\top \lambda_t \right)$

\ENDFOR
\end{algorithmic}
\end{algorithm}
In Remark~\ref{rmk:interpretation}, we pointed out that discretization error is a major bottleneck in controlling constraint violation. This error arises from approximating $h(x_{t+1}) - h(x_t)$ using only the first-order term of the Taylor expansion, leading to an $O(\eta^2)$ residual. A natural question, then, is whether more accurate numerical schemes can reduce this error. Motivated by this, we introduce the midpoint method from numerical analysis (cf. \cite{suli2003introduction}), which achieves a discretization error of $O(\eta^3)$, and develop the midpoint variant of ZOFL (Algorithm~\ref{alg:equality-zeroth-order-midpoint}). Our experiments (Figures~\ref{fig:nonconvex-quadratic-programming} and \ref{fig:thermal-control}) demonstrate that this variant achieves improved constraint satisfaction compared to standard ZOFL. However, ZOFL-midpoint requires twice as many function evaluations per iteration, highlighting a trade-off between accuracy and sample efficiency. We further conjecture that the constraint violation bound under the midpoint method scales as $O(\eta^2)$, and leave a rigorous proof of this property as an open question.


\section{Numerical Validations}
We implement the ZOFL and ZOFL-midpoint algorithms (Algorithm \ref{alg:equality-zeroth-order} and \ref{alg:equality-zeroth-order-midpoint}) and compare it with the ZO-baseline method (\eqref{eq:ZO-baseline}) along with other baseline algorithms in zeroth-order constrained optimization, namely SZO-ConEx (\cite{nguyen_stochastic_2022}) and ZOGDA \cite{liu_min-max_2020}.

\noindent\textbf{Equality Constrained.} We consider the following nonconvex quadratic programming problem
\begin{talign*}
    &\min \frac{1}{2} x^\top x + c^\top x\qquad 
    s.t.~~ \frac{1}{2} x^\top x + a^\top x + b = 0,
\end{talign*}
where $x\in\bR^{100}$, $b=20$ and $a, c\in\bR^{100}$ are random vectors whose entry are sampled from a standard Gaussian distribution.

\noindent\textbf{Inequality Constrained.} We tested our algorithm on learning an efficient controller for building thermal regulation. We assume that the thermal dynamics to be a linear RC model (\cite{zhang2016decentralized,Li2022Distributed}) $x_{t+1} = Ax_t + Bu_t + d,$ where $x_t = \{x_{1,t}, x_{2,t},\dots, x_{n,t}\} \in \bR^n$ represents the temperature in each building at time step $t$, $u_t = \{u_{1,t}, u_{2,t},\dots, u_{n,t}\} $ is the thermal power injection and $d$ is the disturbances. We consider the controller ~$u_{i,t} = k_i x_{i,t} + b_i$~ and the optimization problem is given by optimizing the control parameters: $K = \{k_i\}_{i=1}^n, b = \{b_i\}_{i=1}^n$ to minimize the thermal energy subject to the thermal comfort constraint:
\begin{talign*}
    &\min_{K, b} \frac{1}{T}\sum_{t=0}^{T-1}\frac{1}{n}\sum_{i=1}^n u_{i,t}^2\\
    s.t. \quad &\frac{1}{T}\sum_{t=0}^{T-1}\frac{1}{n}\sum_{i=1}^n \max\left((x_{i,t}-x_{\textup{set}}),0\right)^2 - c\le 0\\
    & x_{t+1} = Ax_t + Bu_t + d, \qquad u_i = k_i x_{i,t} + b_i,
\end{talign*}
where we set $x_{\textup{set}}=22$\textsuperscript{o}C and $c = 1.5$. 

\begin{figure}[htbp]
    \centering
    \begin{subfigure}[b]{0.49\textwidth}
    \centering
        \includegraphics[width=\linewidth]{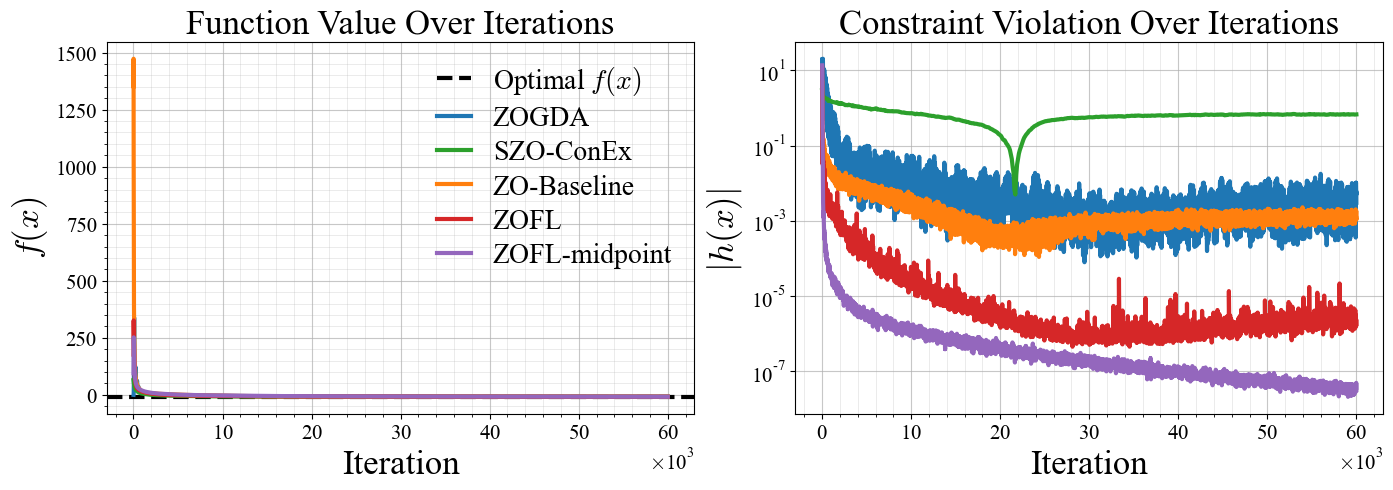}
    \vspace{-10pt}
    \subcaption{\centering \footnotesize Nonconvex quadratic programming\\ with Equality Constraints}
    \label{fig:nonconvex-quadratic-programming}
    \end{subfigure}
    \hfill
    \begin{subfigure}[b]{0.49\textwidth}
        \includegraphics[width=\linewidth]{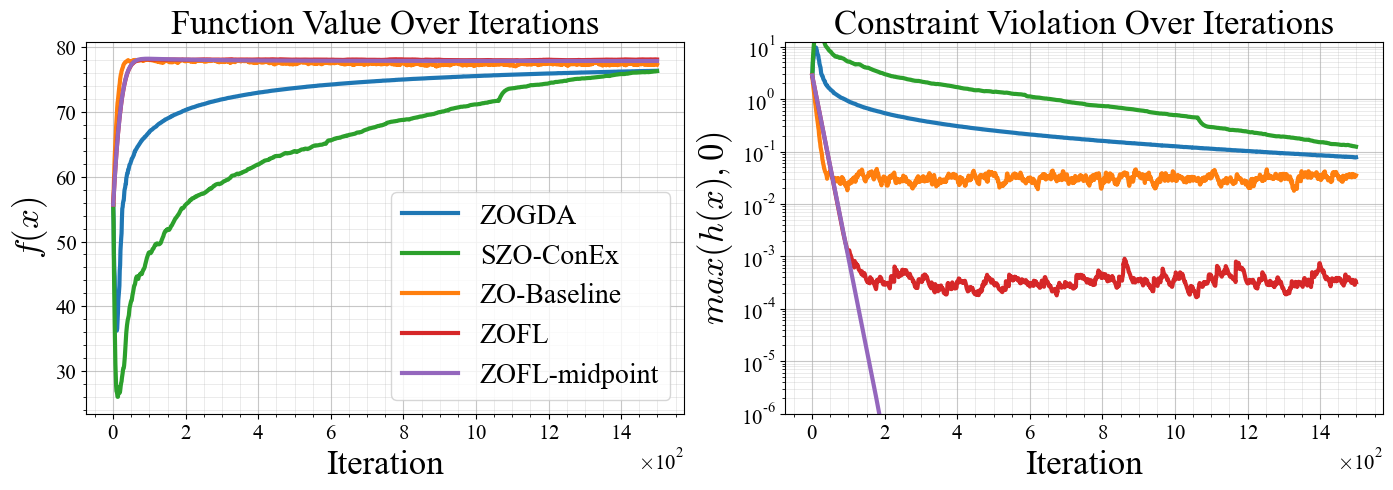}
    \vspace{-10pt}
    \subcaption{\centering\footnotesize Thermal Control with\\ Thermal Comfort Constraints}
    \label{fig:thermal-control}
    \end{subfigure}
\end{figure}
Figures \ref{fig:nonconvex-quadratic-programming} and \ref{fig:thermal-control} present the numerical results. Since diminishing step sizes often converge more slowly and are harder to tune in practice, we use a constant step size for the ZO algorithms. The left-hand plots show the cost function values, while the right-hand plots display the constraint violation. From the simulations, we observe that our algorithm, ZOFL, achieves better constraint satisfaction compared to the baseline methods while maintaining a similar cost. Moreover, ZOFL-midpoint further improves constraint satisfaction. These results suggest that, in safety-critical systems where constraint violations can have severe consequences, our algorithms are more favorable as they maintains safer operations.

\section{Conclusions}
We introduced a control-theoretic framework for zeroth-order constrained optimization, extending feedback linearization ideas to the derivative-free setting. 
Building on this perspective, we developed zeroth-order feedback linearization (ZOFL) algorithms that provide rigorous feasibility guarantees for both equality and inequality constraints, and we proposed a midpoint discretization variant that further reduces violation.
Our analysis shows that the FL perspective yields exponential contraction of constraint errors, while experiments confirm that ZOFL consistently achieves stronger feasibility with competitive objective values compared to existing baselines.

Despite these contributions, several limitations remain. 
Our guarantees rely on access to reasonably accurate zeroth-order oracles, and their robustness under biased or highly noisy evaluations is not yet established.
Moreover, although we prove finite-time bounds on constraint satisfaction and demonstrate strong empirical behavior, formal convergence to stationary points of the underlying problem remains open.
Addressing these challenges, through robust extensions, convergence analysis, and deployment in safety-critical domains, defines a promising direction for future work.

\section*{Acknowledgment}

The authors acknowledge the use of AI-assisted tools for improving grammar, clarity, and overall writing quality.

\appendix
\section{Other Related Works on Derivative Free Constrained Optimization}
Beyond zeroth-order (ZO) methods, several other lines of research in derivative-free constrained optimization have been developed.

One classical family of approaches is filter methods~\cite{audet_pattern_2004,echebest_inexact_2017,pourmohamad_statistical_2020,audet_progressive_2009,dzahini_constrained_2023,Luizzi10Sequential}, which are based on pattern search techniques. These methods iteratively reduce the objective function while attempting to decrease constraint violations, often through a progressive barrier function. While conceptually simple, filter methods generally rely on user-specified surrogate functions to generate candidate points and frequently require solving auxiliary subproblems. As a result, they are not easily generalizable to high-dimensional settings.

Another important class of derivative-free optimization techniques is model-based methods~\cite{muller_gosac_2017,augustin_nowpac_2015,gramacy_modeling_2016}, which build local surrogate models of the objective and constraints and optimize them iteratively. Such methods can achieve strong performance in low- to medium-dimensional problems, but their reliance on accurate surrogate models makes them sample-intensive and thus less practical in high-dimensional scenarios.

A different perspective is offered by extremum seeking (ES)~\cite{ariyur2003real}, which estimates gradients through deterministic perturbations of the system, typically sinusoidal probing signals, as opposed to random perturbations used in two-point estimators. The estimated gradient is then used to drive the system along a descent flow towards an extremum. ES shares a close connection with zeroth-order optimization: it can be interpreted as the continuous-time counterpart of single-point ZO methods~\cite{chen2022improve}. Recent works have begun to extend ES to constrained optimization settings~\cite{hazeleger_sampled-data_2022,chen_model-free_2022}, though its relationship to ZO approaches in this regime remains an open direction for future study.

\!\!Finally, Bayesian optimization (BO) represents another major branch of derivative-free optimization (see, e.g., \cite{gardner2014bayesian,hernandez2016general,acerbi2017practical}). BO adopts a fundamentally different philosophy: it constructs global probabilistic surrogate models (typically Gaussian processes) and leverages acquisition functions to trade off exploration and exploitation. BO is particularly well-suited for low- to medium-dimensional problems where function evaluations are costly, whereas ZO methods are more appropriate in high-dimensional regimes with relatively inexpensive evaluations.

\section{Proof of Theorem \ref{thm:zeroth-order-eq}}
\begin{proof}[Proof of Theorem \ref{thm:zeroth-order-eq}]
    From Taylor's expansion and Assumption \ref{assump:h-norms} we know that
    \begin{align*}
        y_{t+1} - y_t = J_h(x_t)(x_{t+1} - x_t) + \epsilon_t,
    \end{align*}
    where $$\textstyle \|\epsilon_t\| \le M\|x_{t+1}-x_t\|^2 \underset{\textup{Lemma \ref{lemma:auxiliary-stepsize}}}{\le} \underbrace{M\left(nL_f + \frac{64n\overline{L}_h (nL_f\overline{L}_h + \|K\|H)}{\underline{L}_h^2} \right)}_{:=C_1}\eta_t^2.$$
    We define an auxiliary variable $\lambda_t^\star \!:= \! - \left(J_h(x_t)\wJ_h(x_t)^\top\right)^{-1}\left(J_h(x_t)\wnabla f(x_t) \!-\! Kh(x_t)\right)$, and hence
    \begin{talign*}
        &\quad y_{t+1} - y_t = J_h(x_t)(x_{t+1} - x_t) + \epsilon_t\\
        &= -\eta_t J_h(x_t) \left(\wnabla f(x_t) + \wJ_h(x_t)^\top \lambda_t\right) + \epsilon_t\\
        &= -\eta_t J_h(x_t) \left(\wnabla f(x_t) + \wJ_h(x_t)^\top \lambda_t^\star\right)  + \eta_t J_h(x_t)\wJ_h(x_t)^\top (\lambda_t^\star - \lambda_t) + \epsilon_t \\
        &=-\eta_t \left(J_h(x_t)\wnabla f(x_t) - J_h(x_t)\wJ(x_t)^\top \left(J_h(x_t)\wJ(x_t)^\top \right)^{-1}(\wnabla f(x_t) + Kh(x_t))\right)\\
        &\qquad + \eta_t \underbrace{J_h(x_t)\wJ_h(x_t)^\top (\lambda_t^\star - \lambda_t)}_{:=\Delta_t} + \epsilon_t \\
        & = -\eta_t Ky_t + \eta_t\Delta_t + \epsilon_t\\
\Longrightarrow~~ &\quad \|y_{t+1}\| \le (1-\eta_t \lmin(K))\|y_t\| + \eta_t\|\Delta_t\| + C_1\eta_t^2
    \end{talign*}
    Further, from Lemma \ref{lemma:lambda-error}, we have that
    \begin{align*}
       \textstyle \|\Delta_t\| \le C_2r_2^2, ~~\textup{where } C_2 = nR\left(L_f + \frac{64\overline{L}_h(nL_f\overline{L}_h + \|K\|H)}{\underline{L}_h^2}\right)
    \end{align*}
Thus we get that
\begin{talign*}
    \|y_{t+1}\| &\le (1-\eta_t\lmin(K))\|y_t\| + \eta_t C_2 r_2^2 + C_1\eta_t^2\\
    \Longrightarrow \|y_t\|&\le \prod_{s=0}^{t-1} (1-\eta_s\lmin(K)) \|y_0\| + C_2r_2^2\sum_{s=0}^{t-1} \prod_{\tau=s+1}^{t-1}(1-\eta_\tau\lmin(K))\eta_s \\
    &\qquad+ C_1\sum_{s=0}^{t-1} \prod_{\tau=s+1}^{t-1}(1-\eta_\tau\lmin(K))\eta_s^2
\end{talign*}
In particular, if $\eta_t$ is set to a constant $\eta_t = \eta$, we have 
\begin{talign*}
    \|y_t\| &\!\le\! (1\!-\!\eta\lmin(K))^t\|y_0\| \!+\! C_2r_2^2 \sum_{s=0}^{t-1} (1\!-\!\eta\lmin(K))^s\eta \!+\! C_1 \eta^2\sum_{s=0}^{t-1}(1\!-\!\eta\lmin(K))^2\\
    & \le (1-\eta\lmin(K))^t\|y_0\| + \frac{C_2r_2^2}{\lmin(K)} + \frac{C_1\eta}{\lmin(K)}
\end{talign*}

If $\eta_t = \frac{\eta}{\sqrt{t+1}}$, then we have that
\begin{talign*}
    \|y_t\| &\le \prod_{s=0}^{t-1} \left(1\!-\!\frac{\eta\lmin(K)}{\sqrt{s+1}}\right) \|y_0\| \!+\! C_2r_2^2\sum_{s=0}^{t-1} \prod_{\tau=s+1}^{t-1}\left(1\!-\!\frac{\eta\lmin(K)}{\sqrt{\tau+1}}\right)\frac{\eta}{\sqrt{s+1}} \\&\qquad+ C_1\sum_{s=0}^{t-1} \prod_{\tau=s+1}^{t-1}\left(1-\frac{\eta\lmin(K)}{\sqrt{\tau+1}}\right)\frac{\eta^2}{s}\\
    &\underset{\textup{Lemma \ref{lemma:auxiliary-1}}}{\le} e^{-\eta\lmin(K)(\sqrt{t}-1)}\|y_0\| + C_2 r_2^2\eta e^{-\eta\sqrt{t}}\sum_{s=1}^{t}e^{\eta\lmin(K)\sqrt{s}}\frac{1}{\sqrt{s}} \\
    &\qquad\qquad\qquad\quad+ C_1\eta^2e^{-\eta\lmin(K)\sqrt{t}}\sum_{s=1}^{t}e^{\eta\lmin(K)\sqrt{s}}\frac{1}{s}\\
    &\!\!\!\!\underset{\textup{Lemma \ref{lemma:auxiliary-2}, \ref{lemma:auxiliary-3}}}{\le}\!\!\!\!\!\!\!\!\!\!\!\! e^{-\eta\lmin(K)(\sqrt{t}-1)}\|y_0\| \!+\! \frac{2eC_2 r_2^2}{\lmin(K)} \!+\! \frac{C_1\eta e^{2-\eta\sqrt{t}}}{\lmin(K)} \!+\! \frac{2eC_1\eta}{\lmin(K)\sqrt{t+1}}
\end{talign*}
\end{proof}
\subsection{Bounding $\|x_{t+1} - x_t\|$}
\begin{lemma}\label{lemma:auxiliary-stepsize}
    In Algorithm \ref{alg:equality-zeroth-order} we have that given
    \begin{talign*}
        T_B \ge 32\left(m \log\left(\frac{192\cdot n\cdot\overline{L}_h^2}{\underline{L}_h^2}\right) + \log\left(\frac{T_G}{\delta}\right)\right)\sim O\left(m\left(\log(n) + \log\left(\frac{\overline{L}_h}{\underline{L}_h}\right)\right) + \log\left(\frac{T_G}{\delta}\right)\right)
    \end{talign*}
    and $r_1 \le \frac{L_h}{8\sqrt{2\overline{L}_hR}} , \quad r_2 \le \frac{L_h}{8\sqrt{2n\overline{L}_hR}}$
    then with probability at least $1-\delta$,
    \begin{talign*}
        \|x_{t+1} - x_t\|\le \eta_t \left(nL_f + \frac{64n\overline{L}_h (L_f\overline{L}_h + \|K\|H)}{\underline{L}_h^2} \right)
    \end{talign*}
    holds for all $t=1,2,\dots, T_G$
    \begin{proof}
    
    From Assumption \ref{assump:f-norms} and the Cauchy mean value theorem
    \begin{align*}
        &|f(x_t+r_1u_i) - f(x_t-r_1u_i)|\le 2r_1\nabla f(x_t + \tilde r_1u_i)^\top u_i \le 2r_1L_f,\\
        \Longrightarrow~~& \|\wnabla f(x_t)\|\le nL_f.
    \end{align*}
    Similarly, from Cauchy mean value inequality we have that
    \begin{align*}
       \|\wJ(x_t)\| \le n\overline{L}_h, ~~\|G_f\| \le nL_f \overline{L}_h.
    \end{align*}
    Further, from Lemma \ref{lemma:bound-lmin-JwJ}, when $r_1 \le \frac{L_h}{8\sqrt{2\overline{L}_hR}} , r_2 \le \frac{L_h}{8\sqrt{2n\overline{L}_hR}}, $
    we have that \\$\sigma_{\min}(G_h)\ge \frac{\underline{L}_h^2}{64}.$
    Thus
    \begin{talign*}
        \|\lambda_t\| = \|G_h^{-1}(G_f - Kh(x_t))\| \le \frac{64 (L_f\overline{L}_h + \|K\|H)}{\underline{L}_h^2} 
    \end{talign*}
    Finally
    \begin{talign*}
        \|x_{t+1} - x_t\| &\le \eta_t (\|\wnabla f(x)\| + \|\wJ_h(x_t)\|\|\lambda_t\|)\le \eta_t \left(nL_f + \frac{64n\overline{L}_h (nL_f\overline{L}_h + \|K\|H)}{\underline{L}_h^2} \right),
    \end{talign*}
    which completes the proof.
    \end{proof}
\end{lemma}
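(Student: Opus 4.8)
The plan is to read the bound directly off the update in Step~3 of \cref{alg:equality-zeroth-order}. Since $x_{t+1}-x_t = -\eta_t\bigl(\wnabla f(x_t) + \wJ_h(x_t)^\top\lambda_t\bigr)$, the triangle inequality gives $\|x_{t+1}-x_t\| \le \eta_t\bigl(\|\wnabla f(x_t)\| + \|\wJ_h(x_t)\|\,\|\lambda_t\|\bigr)$, so it suffices to bound the estimated gradient, the estimated Jacobian, and the multiplier separately.

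First I would control the finite-difference estimators deterministically. By the mean value theorem, each two-point quotient in \cref{eq:two-point-estimator} equals a directional derivative $\nabla f(\xi)^\top u_i$ (respectively $J_h(\xi)u_i$) at some intermediate point $\xi\in\mathcal{D}$; Assumptions~\ref{assump:f-norms} and~\ref{assump:h-norms} together with $\|u_i\|=1$ bound these by $L_f$ and $\overline{L}_h$. Averaging over the $T_B$ probes and accounting for the dimensional prefactor $n$ then yields $\|\wnabla f(x_t)\|\le nL_f$ and $\|\wJ_h(x_t)\|\le n\overline{L}_h$. The same mean-value argument, applied to the single two-point difference of $h$ defining $\Gf$ in \cref{eq:def-Gf-Gh}, gives $\|\Gf\|\le \|\wnabla f(x_t)\|\,\overline{L}_h \le nL_f\overline{L}_h$.

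For the multiplier, Step~2.2 gives $\lambda_t = -\Gh^{-1}(\Gf - Kh(x_t))$, hence $\|\lambda_t\|\le \sigma_{\min}(\Gh)^{-1}\bigl(\|\Gf\| + \|K\|\,\|h(x_t)\|\bigr)$. Using $\|\Gf\|\le nL_f\overline{L}_h$ and $\|h(x_t)\|\le H$, the numerator is at most $nL_f\overline{L}_h + \|K\|H$. Substituting all three pieces into the triangle inequality and collecting terms reproduces the claimed bound $\eta_t\bigl(nL_f + 64n\overline{L}_h(nL_f\overline{L}_h + \|K\|H)/\underline{L}_h^2\bigr)$.

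The crux---and the only place where the probabilistic guarantee and the batch-size/radius hypotheses are actually used---is the lower bound $\sigma_{\min}(\Gh)\ge \underline{L}_h^2/64$. Because $\Gh$ is a random finite-difference surrogate for $J_h(x_t)\wJ_h(x_t)^\top$ assembled from the sampled directions $u_i$, its smallest singular value does not admit a purely deterministic bound; one must show concentration of the estimator, which is where the condition $T_B \ge 32\bigl(m\log(192 n\overline{L}_h^2/\underline{L}_h^2) + \log(T_G/\delta)\bigr)$ and the small-radius conditions on $r_1,r_2$ originate. I would invoke \cref{lemma:bound-lmin-JwJ} for this, which establishes the bound with probability at least $1-\delta$ simultaneously over all $t\le T_G$ via a union bound (the source of the $\log(T_G/\delta)$ term). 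That concentration estimate is the genuine obstacle; the remaining steps are a routine chain of mean-value and Cauchy--Schwarz inequalities.
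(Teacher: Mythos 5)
Your proposal is correct and follows essentially the same route as the paper's proof: the same triangle-inequality decomposition of the update, the same mean-value bounds $\|\wnabla f(x_t)\|\le nL_f$, $\|\wJ_h(x_t)\|\le n\overline{L}_h$, $\|\Gf\|\le nL_f\overline{L}_h$, the same invocation of Lemma~\ref{lemma:bound-lmin-JwJ} for $\sigma_{\min}(\Gh)\ge \underline{L}_h^2/64$ (the only probabilistic step), and the same resulting multiplier bound. You also correctly identify where the batch-size and radius hypotheses enter and why the $\log(T_G/\delta)$ term appears, so nothing is missing.
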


\begin{lemma}\label{lemma:lambda-error}
    We define an auxiliary variable \\$\lambda_t^\star :=  - \left(J_h(x_t)\wJ_h(x_t)^\top\right)^{-1}\left(J_h(x)\wnabla f(x) - Kh(x)\right)$. Under the conditions as stated in Lemma \ref{lemma:auxiliary-stepsize}, we have that 
    \begin{talign*}
        \|J_h(x_t)\wJ_h(x_t)^\top (\lambda_t^\star - \lambda_t)\|\le nR\left(L_f + \frac{64\overline{L}_h(L_f\overline{L}_h + \|K\|H)}{\underline{L}_h^2}\right) r_2^2
    \end{talign*}
    \vspace{-10pt}
    \begin{proof}
        Further from Lemma \ref{lemma:bound-lmin-JwJ} we have $\|G_h^{-1}\|\le \frac{64}{\underline{L}_h^2}.$
        And thus
        \begin{talign*}
            &\quad \|J_h(x_t)\wJ_h(x_t)^\top (\lambda_t^\star - \lambda_t)\|\\
            &= \|J_h(x_t)\wnabla f(x_t) - Kh(x_t) + J_h(x_t)\wJ_h(x_t)^\top G_h^{-1}(G_f - Kh(x_t))\|\\
            & = \|J_h(x_t)\wnabla f(x_t) - G_f + (G_h - J_h(x_t)\wJ_h(x_t)^\top) G_h^{-1} (G_f - Kh(x))\|\\
            & \le \|J_h(x_t)\wnabla f(x_t) - G_f\| + \|G_h - J_h(x_t)\wJ_h(x_t)^\top\|\|G_h^{-1}\| (\|G_f\|+\|K\|H)\\
            &\stackrel{\textup{Lemma }\ref{lemma:taylor-error}}{\le} n {L}_f Rr_2^2 + n\overline{L}_h Rr_2^2 \frac{64}{\underline{L}_h^2} (L_f\overline{L}_h + \|K\|H)\\
            & = nR\left(L_f + \frac{64\overline{L}_h(nL_f\overline{L}_h + \|K\|H)}{\underline{L}_h^2}\right) r_2^2
        \end{talign*}
    \end{proof}
\end{lemma}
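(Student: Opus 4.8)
The plan is to bound the multiplier error $\lambda_t^\star-\lambda_t$ that arises \emph{solely} from replacing the exact Jacobian–vector products $J_h(x_t)\wnabla f(x_t)$ and $J_h(x_t)\wJ_h(x_t)^\top$ by their two–point finite–difference surrogates $\Gf,\Gh$. Note that $\lambda_t^\star$ and $\lambda_t$ share the \emph{same} estimated gradient and Jacobian $\wnabla f(x_t),\wJ_h(x_t)$, so the only discrepancy is the inner probing error, and the target $-Kh(x_t)$ is identical in both. Rather than subtract the two inverses $(J_h(x_t)\wJ_h(x_t)^\top)^{-1}$ and $\Gh^{-1}$ directly, which is awkward, I would left–multiply the difference by $J_h(x_t)\wJ_h(x_t)^\top$ (the matrix defining $\lambda_t^\star$) so that its inverse cancels exactly and the expression becomes linear in the estimation errors.

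First I would record the two finite–difference approximation bounds. Writing each surrogate column in \cref{eq:def-Gf-Gh} as a symmetric difference $\tfrac{1}{2r_2}\big(h(x_t+r_2 v)-h(x_t-r_2 v)\big)$ and Taylor–expanding to third order, the zeroth– and second–order terms cancel by symmetry, leaving a leading residual governed by $D^3h(x_t)[v,v,v]$. Using the $C^3$ assumption with $\|D^3h\|_{\diag}\le R$ (Assumption~\ref{assump:h-norms}, Definition~\ref{defi:directional-norm}), together with $\|\wnabla f(x_t)\|\le nL_f$ and $\|\wJ_h(x_t)\|\le n\overline L_h$ established inside Lemma~\ref{lemma:auxiliary-stepsize}, this yields
\[
\|\Gf - J_h(x_t)\wnabla f(x_t)\|\le nL_f R\,r_2^2,\qquad
\|\Gh - J_h(x_t)\wJ_h(x_t)^\top\|\le n\overline L_h R\,r_2^2 .
\]

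Next I would invoke Lemma~\ref{lemma:bound-lmin-JwJ} for the conditioning bound $\|\Gh^{-1}\|\le 64/\underline L_h^2$, which holds under the batch–size and radius restrictions of Lemma~\ref{lemma:auxiliary-stepsize}. Substituting the definitions of $\lambda_t^\star$ and $\lambda_t=-\Gh^{-1}(\Gf-Kh(x_t))$ and performing the left–multiplication gives the perturbation identity
\[
J_h(x_t)\wJ_h(x_t)^\top(\lambda_t^\star-\lambda_t)
= \big(\Gf - J_h(x_t)\wnabla f(x_t)\big)
+ \big(J_h(x_t)\wJ_h(x_t)^\top-\Gh\big)\Gh^{-1}\big(\Gf-Kh(x_t)\big),
\]
in which both estimation errors appear linearly. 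The triangle inequality and submultiplicativity, followed by substituting the two $O(r_2^2)$ bounds, the conditioning bound, $\|\Gf\|\le nL_f\overline L_h$, and $\|Kh(x_t)\|\le\|K\|H$ (Assumptions~\ref{assump:f-norms},~\ref{assump:h-norms}), then collecting terms, produces the claimed bound $nR\big(L_f+64\,\overline L_h(nL_f\overline L_h+\|K\|H)/\underline L_h^2\big)r_2^2$.

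The main obstacle is the finite–difference step: one must verify that the \emph{symmetric} probe cancels the even–order Taylor terms so that the leading residual is genuinely $O(r_2^2)$ rather than $O(r_2)$, and that this residual is uniformly controlled by the single constant $R$ through the diagonal norm $\|D^3h\|_{\diag}$. Care is also needed in handling the normalization by $\|\wnabla f(x_t)\|$ and $\|\wnabla h_i(x_t)\|$ in \cref{eq:def-Gf-Gh}, which converts the unit–direction probes back into genuine Jacobian–vector products, and in confirming that the higher–order remainder is dominated once $r_2$ obeys the stated radius restriction. Everything else reduces to routine norm bookkeeping once the perturbation identity above is in hand.
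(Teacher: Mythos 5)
Your proposal is correct and follows essentially the same route as the paper's proof: you left-multiply by $J_h(x_t)\wJ_h(x_t)^\top$ to cancel the inverse defining $\lambda_t^\star$, arrive at the identical perturbation identity (up to an overall sign, which is immaterial inside the norm), and then combine the $O(r_2^2)$ Taylor bounds on $\Gf - J_h(x_t)\wnabla f(x_t)$ and $\Gh - J_h(x_t)\wJ_h(x_t)^\top$ with the conditioning bound $\|\Gh^{-1}\|\le 64/\underline L_h^2$ from Lemma~\ref{lemma:bound-lmin-JwJ}, exactly as the paper does. Your added care about the symmetric finite-difference cancellation and the normalization in \cref{eq:def-Gf-Gh} only makes explicit what the paper compresses into ``From Taylor's expansion.''
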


\section{Proof of Theorem \ref{thm:global-convergence}}\label{apdx:global-convergence}

\noindent\textbf{Notations. }Throughout this proof, we will use the notation $O(x)$ where $x$ is a positive scalar to denote any variable $\epsilon$ such that $\|\epsilon\|\le x$. Further, for $A\in\bR^{m\times n}$ that has full row rank, we use the notation $P_A:= A^\top (AA^\top)^{-1}A$ to denote the orthogonal projector onto the row space of $A$. For a matrix $A\in \bR^{m_1\times n}$ and $B\in \bR^{n\times m_2}$. We use $\sigma_{\min}(A|B):= \min_{v = Bu, u\in \bR^{m_2}} \frac{\|Av\|}{\|v\|}$ to denote the restricted minimum singular value of $A$ in the linear subspace of $B$.
\begin{proof}[Proof of Theorem \ref{thm:global-convergence}]
    At timestep $t$ in Algorithm \ref{alg:equality-zeroth-order}, let $U_t\!:= [u_1, u_2, \dots, u_{T_B}] \!\in\! \bR^{n\times T_B}$, then we have that
    \begin{talign*}
        \wnabla f(x_t) =  \frac{n}{T_B}\UU \nabla f(x_t) + O(nRr_1^2), ~~\wJ_h(x_t) = \frac{n}{T_B} J_h(x_t) \UU + O(nRr_1^2)
    \end{talign*}
    Further
    \begin{talign*}
        &\quad d_t = \wnabla f(x_t) + \wJ_h(x_t)^\top \lambda_t\\
        &\stackrel{\textup{Lemma\ref{lemma:lambda-error}}}{=} \wnabla f(x_t) - \wJ_h(x_t)^\top (J_h(x_t)\wJ_h(x_t)^\top)^{-1} (J_h(x_t)\wnabla f(x_t) - Kh(x_t)) + O(C_2r_2^2)\\
        & = \frac{n}{T_B}\UU \!\!\left(\nabla f(x_t) \!-\! J_h(x_t)^{\!\top}(J_h(x_t) \UU J_h(x_t)^{\!\top}\!)^{-1} (J_h(x_t)\UU\nabla f(x_t) \!-\! Kh(x_t))\right) \\
        &\quad +  \underbrace{O(n\overline{L}_h\underline{L}_h^{-2}Rr_1^2 + C_2r_2^2)}_{\epsilon}\\
        & = \frac{n}{T_B} \left(U_t (I\!-\!P_{J_h U_t}) U_t^\top \nabla f(x_t) +\UU J_h(x_t)^\top(J_h(x_t) \UU J_h(x_t)^\top)^{-1}Kh(x_t)\right)  \!+ \!\epsilon
    \end{talign*}
    Given this, we can verify that
    \begin{talign*}
        J_h(x_t) d_t &= \frac{n}{T_B}Kh(x_t) +\epsilon\\
        \nabla f(x_t)^\top d_t &= \frac{n}{T_B}\left(\|(I-P_{J_hU_t})U_t^\top \nabla f(x_t)\|^2 \right.\\&\left.+ \nabla f(x_t)^\top\UU J_h(x_t)^\top(J_h(x_t) \UU J_h(x_t)^\top)^{-1}Kh(x_t)\right) +\epsilon
    \end{talign*}
    From Assumption \ref{assump:h-norms}, we know that $h$ is $R$-smooth and thus
    \begin{talign*}
        &|h(x_{t+1}) - h(x_t) + \eta_tJ_h(x_t) d_t| \le \frac{M}{2} \|x_{t+1} - x_t\|^2\\
        \Longrightarrow &|h(x_{t+1}) - (I-\frac{n}{T_B}\eta_tK)h(x_t)| \le \frac{M}{2}\eta_t^2\|d_t\|^2 + \epsilon\\
        \Longrightarrow &\|h(x_{t+1})\|_1 - \|h(x_t)\|_1 \le -\eta_t\frac{n}{T_B}\lmin(K)\|h(x_t)\|_1 + \frac{M}{2}\eta_t^2 \|d_t\|^2 + \epsilon
    \end{talign*}
    Further, from the smoothness of $f$ we have that
    \begin{talign*}
        &\quad f(x_{t+1}) - f(x_t) \le -\eta_t\nabla f(x_t)^\top d_t + \frac{M_f}{2}\eta_t^2\|d_t\|^2 \\
        &\le \eta_t\frac{n}{T_B}(-\|(I-P_{J_hU_t})U_t^\top \nabla f(x_t)\|^2\\
        &\quad + \|\nabla f(x_t)^\top\UU J_h(x_t)^\top(J_h(x_t) \UU J_h(x_t)^\top)^{-1}Kh(x_t)\|)+ \epsilon\\
        &\le \eta_t\frac{n}{T_B}(-\|(I-P_{J_hU_t})U_t^\top \nabla f(x_t)\|^2+ 64T_BL_f\overline{L}_h\underline{L}_h^{-2}\|K\|\|h(x_t)\|_1) + \epsilon
    \end{talign*}
    Thus, by combining the above two inequalities, we have that for\\ $\tau \ge 64(T_BL_f+H)\overline{L}_h\underline{L}_h^{-2}\frac{\|K\|}{\lmin(K)}$ and $\phi(x):= f(x) +\tau \|h(x)\|_1$
    \begin{talign*}
    &\phi(x_{t+1}) - \phi(x_t)\le -\eta_t \frac{n}{T_B}(\|(I-P_{J_hU_t})U_t^\top \nabla f(x_t)\|^2 + 64H\overline{L}_h\underline{L}_h^{-2}\frac{\|K\|}{\lmin(K)}\|h(x_t)\|_1) \\&\qquad \qquad\qquad\qquad+ \frac{M_f+\tau M}{2}\eta_t^2\|d_t\|^2 + \epsilon
     \end{talign*}
     Further we have that
     \begin{talign*}
         \|d_t\|^2 \le 3\frac{n^2}{T_B}(\|(I-P_{J_hU_t})U_t^\top \nabla f(x_t)\|^2 + 64H\overline{L}_h\underline{L}_h^{-2}\frac{\|K\|}{\lmin(K)}\|h(x_t)\|_1 + \epsilon^2)
     \end{talign*}
     hence
     \begin{talign*}
       \phi(x_{t+1}) \!-\! \phi(x_t)  &\!\le\! (- \frac{\eta n_t}{T_B}\! + \!3\eta_t^2\frac{n^2(M_f\!+\!\tau M)}{T_B})(\|(I\!-\!P_{J_hU_t})U_t^\top \nabla f(x_t)\|^2 \!\!+ \!\|h(x_t)\|_1)
       \!+\!\epsilon
     \end{talign*}
     and thus for $\eta \le \frac{1}{6n(M_f+\tau M)}$ we have
     \begin{talign*}
         \phi(x_{t+1}) - \phi(x_t) \le -\frac{n}{2T_B}\eta (\|(I-P_{J_hU_t})U_t^\top \nabla f(x_t)\|^2 + \|h(x_t)\|_1) + \epsilon\\
         = -\frac{n}{2T_B}\eta (\|(I-P_{J_hU_t})U_t^\top (I-P_{J_h}) \nabla f(x_t)\|^2 + \|h(x_t)\|_1) + \epsilon
     \end{talign*}
     From the property of orthogonal projector $P_{J_h}, P_{J_h U_t}$ we have
     \begin{talign*}
         \|(I-P_{J_hU_t})U_t^\top (I-P_{J_h}) \nabla f(x_t)\| =\inf_{v\in col(J_h)} \|U_t^\top ((I-P_{J_h}) \nabla f(x_t)-v)\|\\
         \ge\sigma_{\min}(U_t^\top |[J_h(x_t)^{\!\top}\!\!,\nabla\! f(x_t)])\inf_{v\in col(J_h)}\| ((I-P_{J_h}) \nabla f(x_t)-v)\| \\= \sigma_{\min}(U_t^\top |[J_h(x_t)^{\!\top}\!\!,\nabla\! f(x_t)])\|(I-P_{J_h}) \nabla f(x_t)\|
     \end{talign*}
Thus by telescoping we get
\begin{talign*}
    \frac{1}{T}\!\sum_{t=1}^T\!\! \sigma_{\min}\!(U_t^\top |[J_h(x_t)^{\!\top}\!\!,\!\nabla\! f(x_t)])^2\|(I\!-\!P_{J_h}) \nabla f(x_t)\|^2 \!\!+\! \|h(x_t)\|_1 
    \!\le\! \frac{2T_B}{\eta n}\!\left(\!\frac{\phi(x_0)-\phi(T)}{T} \!+\! \epsilon\right)\\
    \Longrightarrow \!\liminf_{t\to+\infty} \left(\sigma_{\min}(U_t^\top |[J_h(x_t)^{\!\top}\!\!,\nabla\! f(x_t)])^2\|(I-P_{J_h}) \nabla f(x_t)\|^2 \!+\! \|h(x_t)\|_1 \right) \!\le \!\frac{2T_B\epsilon}{\eta n}
\end{talign*}
Further, given that the columns of $U_t$ is sampled i.i.d. from unit sphere, from Lemma \ref{lemma:auxi-bound-cov} we know that with a positive probability $(\sigma_{\min}(U_t^\top |[J_h(x_t)^{\!\top}\!\!,\nabla\! f(x_t)])^2 \ge \frac{1}{32}$, thus we have
\begin{talign*}
    \liminf_{t\to+\infty} \left(\|(I-P_{J_h}) \nabla f(x_t)\|^2 \!+\! \|h(x_t)\|_1 \right) \le \frac{64T_B\epsilon}{\eta n}.
\end{talign*}
      
      
\end{proof}
\section{Proof of Theorem \ref{thm:zeroth-order-ineq}}
\begin{proof}[Proof of Theorem \ref{thm:zeroth-order-ineq}]
    The proof follows a similar structure as the proof of Theorem \ref{thm:zeroth-order-eq}, with some substantial changes. From Taylor's expansion and Assumption \ref{assump:h-norms} we know that
    \vspace{-10pt}
    \begin{talign*}
        y_{t+1} - y_t = J_h(x_t)(x_{t+1} - x_t) + \epsilon_t,
    \end{talign*}
    where $\textstyle \|\epsilon_t\| \le M\|x_{t+1}-x_t\|^2 \underset{\textup{Lemma \ref{lemma:auxiliary-stepsize-ineq}}}{\le} \underbrace{M\left(nL_f + \frac{64n\overline{L}_h (L_f\overline{L}_h + \|K\|H)}{\underline{L}_h^2} \right)}_{:=C_1}\eta_t^2.$\\ 
    We define auxiliary variable $\lambda_t^\star,s^\star$ such that it satisfies the following sets of conditions
    \begin{align}\label{eq:def-lambda-star-ineq}
      \textstyle  \left(\!J_h(x_t)\wJ_h(x_t)^\top\!\right)\lambda_t^\star \!+ \!\left(J_h(x)\wnabla f(x) \!-\! Kh(x)\right) \!=\! s^\star, ~\lambda_t^\star\! \ge\! 0, ~s^\star \!\ge\! 0, ~(\lambda_t^\star)^{\!\top} s^\star\! =\!0.
    \end{align}
   and hence
    \begin{talign*}
        &\quad y_{t+1} - y_t = J_h(x_t)(x_{t+1} - x_t) + \epsilon_t= -\eta_t J_h(x_t) \left(\wnabla f(x_t) + \wJ_h(x_t)^\top \lambda_t\right) + \epsilon_t\\
        &= -\eta_t J_h(x_t) \left(\wnabla f(x_t) + \wJ_h(x_t)^\top \lambda_t^\star\right)  + \eta_t \underbrace{J_h(x_t)\wJ_h(x_t)^\top (\lambda_t^\star - \lambda_t)}_{:=\Delta_t} + \epsilon_t \\
        &= -\eta_t(Kh(x_t) + s^\star)+ \eta_t\Delta_t + \epsilon_t= -\eta_t Ky_t -\eta^t s^\star+ \eta_t\Delta_t + \epsilon_t
    \end{talign*}
    Since $s^\star \ge 0$, we have that
    \begin{talign*}
 \|[y_{t+1}]_+\| \le (1-\eta_t \lmin(K))\|[y_{t}]_+\| + \eta_t\|\Delta_t\| + C_1\eta_t^2
    \end{talign*}
    Further, from Lemma \ref{lemma:lambda-error-ineq}, we have that
    \begin{talign*}
        \|\Delta_t\| \le C_2r_2^2, ~~\textup{where } C_2 = n^2 \overline{L}_h^2R\left(\frac{4096 n \overline{L}_h(L_f\overline{L}_h + \|K\|H)}{\underline{L}_h^4} + \frac{64L_f}{\underline{L}_h^2}\right)
    \end{talign*}
Thus the rest of the proof follows exactly the same derivation as the proof of Theorem \ref{thm:zeroth-order-eq}, here we repeat as: 
\begin{talign*}
    \|[y_{t+1}]_+\| &\le (1-\eta_t\lmin(K))\|[y_{t}]_+\| + \eta_t C_2 r_2^2 + C_1\eta_t^2\\
    \Longrightarrow \|[y_{t}]_+\|&\le \prod_{s=0}^{t-1} (1-\eta_s\lmin(K)) \|[y_0]_+\| + C_2r_2^2\sum_{s=0}^{t-1} \prod_{\tau=s+1}^{t-1}(1-\eta_\tau\lmin(K))\eta_s \\
    &\qquad\qquad\qquad\qquad\qquad\qquad\qquad+ C_1\sum_{s=0}^{t-1} \prod_{\tau=s+1}^{t-1}(1-\eta_\tau\lmin(K))\eta_s^2
\end{talign*}
In particular, if $\eta_t$ is set to a constant $\eta_t = \eta$, we have 
\begin{talign*}
    \|[y_{t}]_+\| &\le (1-\eta\lmin(K))^t\|[y_0]_+\| + C_2r_2^2 \sum_{s=0}^{t-1} (1-\eta\lmin(K))^s\eta \\
    &\quad+ C_1 \eta^2\sum_{s=0}^{t-1}(1-\eta\lmin(K))^2\\
    & \le (1-\eta\lmin(K))^t\|[y_0]_+\| + \frac{C_2r_2^2}{\lmin(K)} + \frac{C_1\eta}{\lmin(K)}
\end{talign*}

If $\eta_t = \frac{\eta}{\sqrt{t+1}}$, then we have that
\begin{talign*}
    \|[y_{t}]_+\| &\le \prod_{s=0}^{t-1} \left(1-\frac{\eta\lmin(K)}{\sqrt{s+1}}\right) \|[y_0]_+\| + C_2r_2^2\sum_{s=0}^{t-1} \prod_{\tau=s+1}^{t-1}\left(1-\frac{\eta\lmin(K)}{\sqrt{\tau+1}}\right)\frac{\eta}{\sqrt{s+1}} \\&\qquad+ C_1\sum_{s=0}^{t-1} \prod_{\tau=s+1}^{t-1}\left(1-\frac{\eta\lmin(K)}{\sqrt{\tau+1}}\right)\frac{\eta^2}{s}\\
    &\underset{\textup{Lemma \ref{lemma:auxiliary-1}}}{\le} e^{-\eta\lmin(K)(\sqrt{t}-1)}\|[y_0]_+\| + C_2 r_2^2\eta e^{-\eta\sqrt{t}}\sum_{s=1}^{t}e^{\eta\lmin(K)\sqrt{s}}\frac{1}{\sqrt{s}} \\
    &\quad + C_1\eta^2e^{-\eta\lmin(K)\sqrt{t}}\sum_{s=1}^{t}e^{\eta\lmin(K)\sqrt{s}}\frac{1}{s}\\
    &\underset{\textup{Lemma \ref{lemma:auxiliary-2}, \ref{lemma:auxiliary-3}}}{\le} e^{-\eta\lmin(K)(\sqrt{t}-1)}\|[y_0]_+\| + \frac{2eC_2 r_2^2}{\lmin(K)} + \frac{C_1\eta e^{2-\eta\sqrt{t}}}{\lmin(K)} + \frac{2eC_1\eta}{\lmin(K)\sqrt{t+1}}
\end{talign*}
\end{proof}
\subsection{Bounding $\|x_{t+1} - x_t\|$}
\begin{lemma}\label{lemma:auxiliary-stepsize-ineq}
    In Algorithm \ref{alg:equality-zeroth-order} we have that given
    \begin{talign*}
        T_B \ge 32\left(m \log\left(\frac{192\cdot n\cdot\overline{L}_h^2}{\underline{L}_h^2}\right) + \log\left(\frac{T_G}{\delta}\right)\right)\sim O\left(m\left(\log(n) + \log\left(\frac{\overline{L}_h}{\underline{L}_h}\right)\right) + \log\left(\frac{T_G}{\delta}\right)\right)
    \end{talign*}
    and
        $r_1 \le \frac{L_h}{8\sqrt{2\overline{L}_hR}} , \quad r_2 \le \frac{L_h}{8\sqrt{2n\overline{L}_hR}},$    then with probability at least $1-\delta$,
    \begin{talign*}
        \|x_{t+1} - x_t\|\le \eta_t \left(nL_f + \frac{64n\overline{L}_h (L_f\overline{L}_h + \|K\|H)}{\underline{L}_h^2} \right)
    \end{talign*}
    holds for all $t=1,2,\dots, T_G$
    \begin{proof}
    
    From Assumption \ref{assump:f-norms} and the Cauchy mean value theorem
    \begin{talign*}
        &|f(x_t+r_1u_i) - f(x_t-r_1u_i)|\le 2r_1\nabla f(x_t + \tilde r_1u_i)^\top u_i \le 2r_1L_f,\\
        \Longrightarrow~~& \|\wnabla f(x_t)\|\le nL_f.
    \end{talign*}
    Similarly, from Cauchy mean value inequality we have that
    \begin{talign*}
       \|\wJ(x_t)\| \le n\overline{L}_h, ~~\|G_f\| \le L_f \overline{L}_h.
    \end{talign*}
    Further, from Lemma \ref{lemma:bound-lmin-JwJ}, when
        $r_1 \le \frac{L_h}{8\sqrt{2\overline{L}_hR}} , \quad r_2 \le \frac{L_h}{8\sqrt{2n\overline{L}_hR}},$
    we have that
    \begin{talign*}
        \sigma_{\min}(G_h)\ge \frac{\underline{L}_h^2}{64}.
    \end{talign*}
   Also, note that $\lambda_t$ is given by the following sets of equations
   \begin{talign*}
       G_h\lambda_t + G_f = Kh(x_t)  + s, ~~\lambda_t\ge0, ~~s\ge 0,~~ \lambda_t^\top s = 0.
   \end{talign*}
   Thus let the index set $\cI$ be $\cI:= \{i: s_i =0\}$, then we have that
   \begin{talign*}
       [\lambda_t]_{\cI^c} = 0,\quad
       [G_h]_{\cI\cI}[\lambda_t]_\cI + [G_h - Kh(x_t)]_\cI =0\\ 
   \Longrightarrow~
       \|\lambda_t\| = \left\|[G_h]^{-1}_{\cI\cI}[G_f - Kh(x_t)]_\cI\right\|
   \end{talign*}
   From Cauchy's Interlacing Theorem we get that
       $\sigma_{\min}([G_h]_{\cI\cI}) \ge \frac{\underline{L}_h^2}{64}$.
   Thus
   \begin{talign*}
       \|\lambda_t\| \le \frac{64}{\underline{L}_h^2} \|G_h - Kh(x_t)\|\le \frac{64n\overline{L}_h (L_f\overline{L}_h + \|K\|H)}{\underline{L}_h^2} 
   \end{talign*}
    \begin{talign*}
     \textup{Finally}~   \|x_{t+1} \!-\! x_t\| \le \eta_t (\|\wnabla f(x)\| \!+\! \|\wJ_h(x_t)\|\|\lambda_t\|)\le \eta_t \left(nL_f + \frac{64n\overline{L}_h (L_f\overline{L}_h + \|K\|H)}{\underline{L}_h^2} \right),
    \end{talign*}
    which completes the proof.
    \end{proof}
\end{lemma}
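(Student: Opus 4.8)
The plan is to bound the one-step displacement directly from the update in Step~3 of \cref{alg:inequality-zeroth-order}. Writing $x_{t+1}-x_t=-\eta_t\big(\wnabla f(x_t)+\wJ_h(x_t)^\top\lambda_t\big)$ and applying the triangle inequality gives
\[
\|x_{t+1}-x_t\|\le \eta_t\big(\|\wnabla f(x_t)\|+\|\wJ_h(x_t)\|\,\|\lambda_t\|\big).
\]
The first two norms are controlled pathwise, with no probabilistic input: applying the Cauchy mean value theorem to each finite difference and using $\|\nabla f\|\le L_f$, $\|J_h\|\le\overline L_h$ (\cref{assump:f-norms,assump:h-norms}) together with $\|u_i\|=1$ yields $\|\wnabla f(x_t)\|\le nL_f$, $\|\wJ_h(x_t)\|\le n\overline L_h$, and $\|\Gf\|\le L_f\overline L_h$. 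Thus the entire burden falls on bounding $\|\lambda_t\|$.

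This is where the inequality case genuinely departs from the equality case: $\lambda_t$ is no longer the closed-form expression $-\Gh^{-1}(\Gf-Kh(x_t))$ but the solution of the complementarity system
\[
\Gh\lambda_t+\Gf=Kh(x_t)+s,\qquad \lambda_t\ge0,\ s\ge0,\ \lambda_t^\top s=0.
\]
I would first localize to the active set $\cI:=\{i:s_i=0\}$. Complementary slackness forces $[\lambda_t]_{\cI^c}=0$, so restricting the linear equation to the rows indexed by $\cI$ collapses it to the square system $[\Gh]_{\cI\cI}[\lambda_t]_\cI=[Kh(x_t)]_\cI-[\Gf]_\cI$. Consequently
\[
\|\lambda_t\|=\|[\lambda_t]_\cI\|\le \big\|[\Gh]_{\cI\cI}^{-1}\big\|\,\big(\|\Gf\|+\|K\|H\big)\le \frac{\|\Gf\|+\|K\|H}{\sigma_{\min}([\Gh]_{\cI\cI})},
\]
and the problem reduces to a lower bound on the smallest singular value of the active principal submatrix $[\Gh]_{\cI\cI}$.

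The main obstacle is exactly this submatrix conditioning. \cref{lemma:bound-lmin-JwJ} supplies $\sigma_{\min}(\Gh)\ge\underline L_h^2/64$ on a high-probability event (its batch-size condition already unions over $t\le T_G$), but transferring a singular-value bound to a principal submatrix is delicate because $\Gh\approx J_h(x_t)\wJ_h(x_t)^\top$ need not be symmetric, and the active set $\cI$ is itself random, so the bound must hold uniformly over all $2^m$ candidate index sets. I would resolve this by decomposing $\Gh=J_h(x_t)J_h(x_t)^\top+E$, where $E$ collects the estimation error $J_h(\wJ_h-J_h)^\top$ and the $O(r_2^2)$ finite-difference error. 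The principal part $J_hJ_h^\top$ is symmetric positive definite with $\lambda_{\min}\ge\underline L_h^2$, so Cauchy's interlacing theorem applies legitimately and gives $\lambda_{\min}\big([J_hJ_h^\top]_{\cI\cI}\big)\ge\underline L_h^2$ for every $\cI$; since any submatrix has operator norm at most that of the full matrix, $\|[E]_{\cI\cI}\|\le\|E\|$, and the same sample-size and radius conditions that drive \cref{lemma:bound-lmin-JwJ} keep $\|E\|$ small enough to conclude $\sigma_{\min}([\Gh]_{\cI\cI})\ge\underline L_h^2/64$ simultaneously for all active sets. This yields $\|\lambda_t\|\le 64(L_f\overline L_h+\|K\|H)/\underline L_h^2$; substituting the three bounds back into the triangle inequality gives the claimed constant $nL_f+64n\overline L_h(L_f\overline L_h+\|K\|H)/\underline L_h^2$, and conditioning on the single event from \cref{lemma:bound-lmin-JwJ} delivers the bound for all $t=1,\dots,T_G$ with probability at least $1-\delta$.
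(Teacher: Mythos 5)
Your proposal follows the same route as the paper's proof: triangle inequality on the update, Cauchy mean-value bounds on $\|\wnabla f(x_t)\|$, $\|\wJ_h(x_t)\|$, $\|\Gf\|$, localization of the complementarity system to the active set $\cI$ (complementary slackness forces $[\lambda_t]_{\cI^c}=0$, collapsing to the square system $[\Gh]_{\cI\cI}[\lambda_t]_\cI = [Kh(x_t)-\Gf]_\cI$), and a lower bound on $\sigma_{\min}([\Gh]_{\cI\cI})$ via interlacing, yielding the same constants. You also correctly flag a subtlety that the paper itself glosses over: $\Gh$ is not symmetric, Cauchy's interlacing theorem does not apply to it directly (the paper invokes it anyway), and for non-symmetric matrices the smallest singular value of a principal submatrix can be far below that of the full matrix.

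However, your repair of that subtlety contains a genuine error. You decompose $\Gh = J_h(x_t)J_h(x_t)^\top + E$, where $E$ absorbs $J_h(x_t)\big(\wJ_h(x_t)-J_h(x_t)\big)^\top$ plus the $O(r_2^2)$ probing error, and claim that ``the same sample-size and radius conditions that drive \cref{lemma:bound-lmin-JwJ} keep $\|E\|$ small.'' They do not. With $T_B = O\big(m\log(\cdot)\big)$ samples, $\wJ_h(x_t) \approx \frac{n}{T_B}\sum_i J_h(x_t)u_iu_i^\top$ is nowhere near $J_h(x_t)$ in operator norm: $\frac{n}{T_B}\sum_i u_iu_i^\top$ concentrates around $I$ only when $T_B \gtrsim n$, and even after sandwiching by $J_h$, driving the resulting $m\times m$ deviation below the target $\underline{L}_h^2/64$ would require $T_B$ to grow polynomially in the condition number $\overline{L}_h/\underline{L}_h$, whereas the stated batch size depends on it only logarithmically. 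This is exactly why \cref{lemma:bound-lmin-JwJ} uses a one-sided small-ball (anti-concentration) argument rather than concentration: it never asserts $\wJ_h \approx J_h$, only a lower bound on $\lmin$ of the symmetric PSD matrix $S := \frac{n}{T_B}\sum_i (J_h(x_t)u_i)(J_h(x_t)u_i)^\top$. The fix is to take $S$, not $J_hJ_h^\top$, as the symmetric principal part: write $\Gh = S + E'$, where $E'$ collects only the finite-difference Taylor errors, so $\|E'\| \le \overline{L}_h R(r_1^2 + nr_2^2)$ is controlled by the radius conditions. On the high-probability event of \cref{lemma:bound-lmin-JwJ} one has $\lmin(S)\ge \underline{L}_h^2/32$, interlacing applied to the symmetric $S$ gives $\lmin([S]_{\cI\cI})\ge \underline{L}_h^2/32$ for every $\cI$, and since $\|[E']_{\cI\cI}\|\le\|E'\|$ you recover $\sigma_{\min}([\Gh]_{\cI\cI}) \ge \underline{L}_h^2/64$ uniformly over all index sets; the remainder of your argument then goes through unchanged.
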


\begin{lemma}\label{lemma:lambda-error-ineq}
    We define the auxiliary variable $\lambda_t^\star$ as in \eqref{eq:def-lambda-star-ineq}. Under the conditions as stated in Lemma \ref{lemma:auxiliary-stepsize-ineq}, we have that 
    \begin{talign*}
        \|J_h(x_t)\wJ_h(x_t)^\top (\lambda_t^\star - \lambda_t)\|\le n^2 \overline{L}_h^2R\left(\frac{4096 n L_f\overline{L}_h^2}{\underline{L}_h^4} + \frac{64L_f}{\underline{L}_h^2}\right)r_2^2
    \end{talign*}
    \begin{proof}
    Define
    \begin{talign*}
       & ~~~A = J_h(x_t)\wJ_h(x_t)^\top, & b &= J_h(x_t)\wnabla f(x_t) - Kh(x)\\
        &\Delta A = G_h - J_h(x_t)\wJ_h(x_t)^\top,  &\hspace{-25pt}\Delta b &= G_f - J_h(x_t)\wnabla f(x_t)
    \end{talign*}
        From Lemma \ref{lemma:taylor-error}:
            $\|\Delta A\| \! \le\! n\overline{L}_h Rr_2^2, ~
            \|\Delta b\| \!\le\! nL_f Rr_2^2,~
            \| G_f\|, \!\|J_h(x_t)\wnabla f(x_t)\|\!\le\! nL_f\overline{L}_h.$

        For the sake of notational simplicity, in the proof we abbreviate $\lambda_t, \lambda_t^\star$ as $\lambda, \lambda^\star$. We define $A(\alpha), b(\alpha)$ as
        \begin{talign*}
            A(\alpha) := A + \alpha \Delta A,~~ B(\alpha) = B + \alpha \Delta B
        \end{talign*}
        and define $\lambda(\alpha)$ to be the solution of
        \begin{align}\label{eq:lambda-alpha-eq}
           \textstyle A(\alpha)\lambda(\alpha) + b(\alpha) = s(\alpha), ~~\lambda(\alpha)\ge 0, ~~s(\alpha)\ge 0,~~ \lambda(\alpha)^\top s(\alpha) = 0
        \end{align}
        Then it is clear that $\lambda = \lambda(1),\lambda^\star = \lambda(0)$.

        We can find a sequence of $\{\alpha_i\}$ such that $0=\alpha_0 < \alpha_1 < ... < \alpha_N = 1 $ such that with in each interval $\alpha, \alpha'\in (\alpha_i, \alpha_{i+1})$, $\lambda(\alpha)$ and $\lambda(\alpha')$ shares exactly the same support, which we denote as $\cI_i$. And from \eqref{eq:lambda-alpha-eq} we know that for any $\alpha\in [\alpha_1, \alpha_2]$, $\lambda(\alpha)$ can be written as follows:
        \begin{talign*}
            [\lambda(\alpha)]_{\cI_i} = [A(\alpha)]_{\cI_i\cI_i}^{-1}b(\alpha) ,~~
            [\lambda(\alpha)]_{\cI_i^c} = 0
        \end{talign*}
        And thus we get
        \begin{talign*}
            &\quad\|\lambda(\alpha_{i+1})-\lambda(\alpha_i)\|\le \|[A(\alpha_{i+1})]_{\cI_i\cI_i}^{-1}b(\alpha_{i+1}) - [A(\alpha_{i})]_{\cI_i\cI_i}^{-1}b(\alpha_{i})\|\\
            & \le\!\|\!A(\!\alpha_{i+1}\!)]_{\cI_i\cI_i}^{-1}\!\|\|A(\!\alpha_{i}\!)]_{\cI_i\cI_i}^{-1}\|A(\!\alpha_{i+1}\!)\!-\!A(\!\alpha_i\!)\|\|\|b(\!\alpha_{i+1}\!)\| \!+\! \|A(\alpha_{i})]_{\cI_i\cI_i}^{-1}\|\|b(\alpha_{i+1})\!-\!b(\alpha_i)\|\\
            &= (\alpha_{i+1}-\alpha_i)\left(\|A(\alpha_{i+1})]_{\cI_i\cI_i}^{-1}\|\|A(\alpha_{i})]_{\cI_i\cI_i}^{-1}\|\Delta A\|\|\|b(\alpha_{i+1})\| + \|A(\alpha_{i})]_{\cI_i\cI_i}^{-1}\|\|\Delta b\|\right)\\
            &\le (\alpha_{i+1}-\alpha_i)\left(\overline{L_h}\|A(\alpha_{i+1})]_{\cI_i\cI_i}^{-1}\|\|A(\alpha_{i})]_{\cI_i\cI_i}^{-1}\|\|b(\alpha_{i+1})\| + L_f\|A(\alpha_{i})]_{\cI_i\cI_i}^{-1}\|\right) nRr_2^2
        \end{talign*}
        Further, from Lemma \ref{lemma:bound-lmin-JwJ} and Cauchy's interlacing theorem (cf. \cite{hwang2004cauchy}) we know that for any principal minor of $A(\alpha)$ we have
        \begin{talign*}
            \|[A(\alpha)^{-1}]_{\cI\cI}\|\le \frac{64}{\underline{L}_h^2}, ~~\forall \alpha\in[0,1]
        \end{talign*}
        and clearly $\|b(\alpha)\|\le nL_f\overline{L}_h + \|K\|H$. And thus we get
        \begin{talign*}
            \|\lambda(\alpha_{i+1}) - \lambda(\alpha_i)\|\le (\alpha_{i+1}-\alpha_i) \left(\frac{4096 n \overline{L}_h(L_f\overline{L}_h + \|K\|H)}{\underline{L}_h^4} + \frac{64L_f}{\underline{L}_h^2}\right) nRr_2^2
        \end{talign*}
        And thus
        \begin{talign*}
            \|\lambda - \lambda^\star\| = \|\lambda(1) - \lambda(0)\| \le \left(\frac{4096 n \overline{L}_h(L_f\overline{L}_h + \|K\|H)}{\underline{L}_h^4} + \frac{64L_f}{\underline{L}_h^2}\right) nRr_2^2.
        \end{talign*}
        Thus
        \begin{talign*}
            &\|J_h(x_t)\wJ_h(x_t)^\top (\lambda_t^\star \!\!-\! \lambda_t)\|\le n\overline{L}_h^2 \|\lambda_t^\star \!-\! \lambda_t\| \!\le\! n^2 \overline{L}_h^2R\left(\frac{4096 n \overline{L}_h(L_f\overline{L}_h + \|K\|H)}{\underline{L}_h^4} \!+\! \frac{64L_f}{\underline{L}_h^2}\right)r_2^2
        \end{talign*}
    \end{proof}
\end{lemma}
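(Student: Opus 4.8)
The plan is to follow the template of the equality case (Theorem~\ref{thm:zeroth-order-eq}), but with two modifications: the closed-form multiplier is replaced by the solution of the complementarity system~\eqref{eq:lambda-t-intuition-KKT}, and I track only the \emph{positive part} $[h(x_t)]_+$ rather than $h(x_t)$ itself. First I would condition on the high-probability event — guaranteed uniformly over $t\le T_G$ by the batch-size choice and Lemma~\ref{lemma:bound-lmin-JwJ} via a union bound — on which $\sigma_{\min}(\Gh)\ge \underline L_h^2/64$. Writing $y_t=h(x_t)$ and Taylor-expanding gives $y_{t+1}-y_t = J_h(x_t)(x_{t+1}-x_t)+\epsilon_t$ with $\|\epsilon_t\|\le M\|x_{t+1}-x_t\|^2\le C_1\eta_t^2$; controlling the remainder reduces to the step-size bound $\|x_{t+1}-x_t\|=O(\eta_t)$, which in the inequality setting requires bounding $\|\lambda_t\|$ from the LCP rather than from a matrix inverse (restricting to the active coordinates and invoking the conditioning of the corresponding sub-Jacobian).

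Next I would introduce the exact-Jacobian reference pair $(\lambda_t^\star,s^\star)$ solving $J_h(x_t)\wJ_h(x_t)^\top\lambda_t^\star + \big(J_h(x_t)\wnabla f(x_t)-Kh(x_t)\big)=s^\star$ with $\lambda_t^\star,s^\star\ge 0$ and $(\lambda_t^\star)^\top s^\star=0$. Substituting the update and using this identity yields the one-step relation $y_{t+1}-y_t = -\eta_t K y_t -\eta_t s^\star + \eta_t\Delta_t + \epsilon_t$, where $\Delta_t:=J_h(x_t)\wJ_h(x_t)^\top(\lambda_t^\star-\lambda_t)$. The structural gain over a two-sided estimate is that $s^\star\ge 0$: since $[\cdot]_+$ is monotone and $1$-Lipschitz, subtracting the nonnegative slack can only decrease each coordinate, so after taking positive parts the slack term drops out and the linear part contracts, giving $\|[y_{t+1}]_+\|\le (1-\eta_t\lambda_{\min}(K))\|[y_t]_+\| + \eta_t\|\Delta_t\| + C_1\eta_t^2$. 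Once $\|\Delta_t\|\le C_2 r_2^2$ is established, this recursion is \emph{identical} to the one in Theorem~\ref{thm:zeroth-order-eq}, so the constant- and diminishing-step bounds follow verbatim by the same summation and the same auxiliary series estimates.

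The main obstacle is bounding $\|\Delta_t\| = \|J_h(x_t)\wJ_h(x_t)^\top(\lambda_t^\star-\lambda_t)\|$. In the equality case $\lambda_t,\lambda_t^\star$ are both explicit matrix-inverse expressions, so their difference is a routine perturbation estimate; here both solve \emph{complementarity} problems whose solution map is only piecewise smooth, because perturbing $(\Gh,\Gf)$ away from $(J_h\wJ_h^\top,\,J_h\wnabla f)$ can change the active set. My plan is a homotopy argument: interpolate $A(\alpha)=A+\alpha\Delta A$, $b(\alpha)=b+\alpha\Delta b$ between the exact data ($\alpha=0$) and the zeroth-order data ($\alpha=1$), where Taylor expansion gives $\|\Delta A\|\le n\overline L_h R r_2^2$ and $\|\Delta b\|\le n L_f R r_2^2$. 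I would partition $[0,1]$ into finitely many intervals on which the active set $\cI_i$ is constant; on each, $[\lambda(\alpha)]_{\cI_i}=[A(\alpha)]_{\cI_i\cI_i}^{-1}b(\alpha)$ is smooth, and a Cauchy interlacing argument keeps every relevant principal sub-system uniformly well-conditioned, with $\|[A(\alpha)]_{\cI_i\cI_i}^{-1}\|\le 64/\underline L_h^2$. Differencing across each interval and telescoping yields $\|\lambda_t^\star-\lambda_t\|=O(r_2^2)$, and multiplying by $\|J_h\wJ_h^\top\|\le n\overline L_h^2$ gives the stated $C_2$.

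The delicate point — and the reason $C_2$ carries an extra $\underline L_h^{-2}$ factor and a factor of $n\overline L_h$ relative to the equality case — is precisely this propagation of the sub-Jacobian's ill-conditioning along the active-set path: every sub-system inverse contributes a $1/\underline L_h^2$ factor, and since the difference of two such inverses appears, the conditioning enters quadratically. I expect verifying that the homotopy decomposes into finitely many fixed-active-set pieces, and that the uniform conditioning bound applies across all of them, to be the technical crux; the remaining steps are faithful transcriptions of the equality-case recursion.
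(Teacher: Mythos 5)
Your proposal matches the paper's own proof essentially step for step: the same homotopy $A(\alpha)=A+\alpha\Delta A$, $b(\alpha)=b+\alpha\Delta b$ between the exact-Jacobian LCP data and the zeroth-order data, the same partition of $[0,1]$ into finitely many constant-active-set intervals with the explicit formula $[\lambda(\alpha)]_{\cI_i}=[A(\alpha)]_{\cI_i\cI_i}^{-1}b(\alpha)$ on each piece, the same uniform bound $\|[A(\alpha)]_{\cI_i\cI_i}^{-1}\|\le 64/\underline L_h^2$ via Lemma~\ref{lemma:bound-lmin-JwJ} and Cauchy interlacing, and the same telescoping followed by multiplication by $\|J_h(x_t)\wJ_h(x_t)^\top\|\le n\overline L_h^2$. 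Even the point you flag as the technical crux (the existence of the finite constant-active-set decomposition) is asserted, not proved, in the paper as well, so your plan is faithful to the paper's argument in both its content and its level of rigor.
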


\section{Bounding $\lmin(J_h(x_t)\wJ_h(x_t))$}
\begin{lemma}\label{lemma:bound-lmin-JwJ}
    In Algorithm \ref{alg:equality-zeroth-order}, we have that given
    \begin{talign*}
        T_B \ge 32\left(m \log\left(\frac{192\cdot n\cdot\overline{L}_h^2}{\underline{L}_h^2}\right) + \log\left(\frac{T_G}{\delta}\right)\right)\sim O\left(m\left(\log(n) + \log\left(\frac{\overline{L}_h}{\underline{L}_h}\right)\right) + \log\left(\frac{T_G}{\delta}\right)\right),
    \end{talign*}
    then with probability at least $1-\delta$
    \begin{talign*}
        \lmin(J_h(x_t)\wJ_h(x_t)^\top) \ge \frac{\underline{L}_h^2}{32} - \overline{L}_h Rr_1^2,~~
        \sigma_{\min}(G_h) \ge \frac{\underline{L}_h^2}{32} - \overline{L}_h R(r_1^2+nr_2^2).
    \end{talign*}
    for all $t=1, 2, \dots, T_G$
    \begin{proof}
        From Taylor's expansion and Assumption \ref{assump:h-norms} we have that
        \begin{talign*}
            \wJ_h(x_t) = \frac{n}{T_B}\sum_{i=1}^{T_b}J_h(x_t) u_i u_i^\top + \epsilon(x_t), 
        \end{talign*}
        where $\|\epsilon (x_t)\| \le Rr_1^2$.
        Thus
        \begin{talign*}
            J_h(x_t) \wJ_h(x_t)^\top = \frac{n}{T_B}\sum_{i=1}^{T_b}J_h(x_t) u_i u_i^\top J_h(x_t) + \widetilde{\epsilon}(x_t), ~~~\textup{where } \|\widetilde{\epsilon}(x_t)\|\le \overline{L}_h Rr_1^2
        \end{talign*}
        Further, from Lemma \ref{lemma:auxi-bound-cov} we have that when
        \begin{talign*}
            T_B \ge 32\left(m \log\left(\frac{192\cdot n\cdot\overline{L}_h^2}{\underline{L}_h^2}\right) + \log\left(\frac{T_G}{\delta}\right)\right)\sim O\left(m\left(\log(n) + \log\left(\frac{\overline{L}_h}{\underline{L}_h}\right)\right) + \log\left(\frac{T_G}{\delta}\right)\right)
        \end{talign*}
        then with probability at least $1-\delta$
        \begin{talign*}
            \lmin\left(\frac{n}{T_B}\sum_{i=1}^{T_B}J_h(x_t) u_i u_i^\top J_h(x_t)\right) \ge \frac{\underline{L}_h^2}{32}, ~~\forall ~ t=1,2,\dots, T_G
        \end{talign*}
        And thus
     \vspace{-20pt}
        \begin{talign*}
            \lmin(J_h(x_t)\wJ_h(x_t)^\top) \ge \frac{\underline{L}_h^2}{32} - \overline{L}_h Rr_1^2
        \end{talign*}
        Further, from Lemma \ref{lemma:taylor-error} we have $\|G_h - J_h(x_t)\wJ_h(x_t)^\top\| \le n\overline{L}_h Rr_2^2$
        and thus
        \begin{talign*}
            \sigma_{\min}(G_h) \ge \frac{\underline{L}_h^2}{32} - \overline{L}_h R(r_1^2+nr_2^2)
        \end{talign*}
        \vspace{-20pt}
    \end{proof}
\end{lemma}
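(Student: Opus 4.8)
The plan is to split the Jacobian estimator $\widetilde J_h(x_t)$ into a clean random‑quadrature term plus a controllable finite‑difference bias, reduce the eigenvalue bound to a concentration statement about the quadrature term, and then transfer the resulting bound from $J_h(x_t)\widetilde J_h(x_t)^\top$ to $G_h$ by one more Taylor estimate.

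First I would Taylor-expand the symmetric difference quotient. Since the even-order terms cancel, \cref{assump:h-norms} (with $\|D^3h\|_{\mathrm{diag}}\le R$) gives, for each probe direction $u_i$,
\[
\frac{h(x_t+r_1u_i)-h(x_t-r_1u_i)}{2r_1}=J_h(x_t)u_i+e_i,\qquad \|e_i\|\le \tfrac{R}{6}\,r_1^2 .
\]
Summing the outer products yields $\widetilde J_h(x_t)=J_h(x_t)\widehat\Sigma_t+\epsilon(x_t)$, where $\widehat\Sigma_t:=\frac{n}{T_B}\sum_i u_iu_i^\top$ is the rescaled empirical covariance of the directions (with $\mathbb E[\widehat\Sigma_t]=I$) and $\|\epsilon(x_t)\|=O(r_1^2)$. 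Forming $J_h(x_t)\widetilde J_h(x_t)^\top$ and using $\|J_h(x_t)\|\le\overline L_h$ gives
\[
J_h(x_t)\widetilde J_h(x_t)^\top=J_h(x_t)\widehat\Sigma_t J_h(x_t)^\top+\widetilde\epsilon(x_t),\qquad \|\widetilde\epsilon(x_t)\|\le\overline L_h R\,r_1^2,
\]
so by Weyl's inequality it suffices to lower-bound $\lambda_{\min}\!\big(J_h(x_t)\widehat\Sigma_t J_h(x_t)^\top\big)$.

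The core step, and the main obstacle, is the concentration of this rescaled sample covariance. Writing $A=J_h(x_t)$ and $z_i=Au_i\in\bR^m$, we have $J_h(x_t)\widehat\Sigma_t J_h(x_t)^\top=\frac{n}{T_B}\sum_i z_iz_i^\top$ with mean $AA^\top$ and $\lambda_{\min}(AA^\top)\ge\underline L_h^2$. A crude matrix-Chernoff bound using only $\|z_i\|^2\le\overline L_h^2$ would force $T_B$ to scale like $\tfrac{n\overline L_h^2}{\underline L_h^2}$, far worse than the claimed $\widetilde O(m)$. The key observation is that for $u$ uniform on $\mathbb S^{n-1}$ and fixed $a=A^\top w$ with $\|a\|\in[\underline L_h,\overline L_h]$, the form $n(a^\top u)^2$ is \emph{sub-exponential} with parameter $O(\|a\|^2)$ — it behaves like $\|a\|^2$ times a single $\chi^2_1$ variable — even though its worst case is $\|a\|^2$. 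Hence for each fixed unit $w$ a Bernstein bound gives $w^\top\!\big(\tfrac{n}{T_B}\sum_i z_iz_i^\top\big)w\ge\tfrac12\|A^\top w\|^2\ge\tfrac12\underline L_h^2$ with failure probability $e^{-cT_B}$, independent of $n$ and of the conditioning except inside logarithms. I would then pass from a single direction to all of $\mathbb S^{m-1}$ with an $\epsilon$-net of cardinality $(C/\epsilon)^m$, and finally union-bound over the $T_G$ iterations. These two union bounds reproduce the stated requirement $T_B\ge 32\big(m\log(192\,n\overline L_h^2/\underline L_h^2)+\log(T_G/\delta)\big)$, while the compounded losses (net-to-supremum and the $\tfrac12$ deviation) account for the conservative constant, giving $\lambda_{\min}\!\big(J_h(x_t)\widehat\Sigma_t J_h(x_t)^\top\big)\ge\underline L_h^2/32$ simultaneously for all $t$ with probability $\ge1-\delta$. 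This is exactly the content I would isolate as \cref{lemma:auxi-bound-cov}.

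Combining this with the Weyl step proves the first display. For the second, I would observe that $G_h$ is itself a symmetric-difference estimate of $J_h(x_t)\widetilde J_h(x_t)^\top$, probing along the $m$ estimated directions $v_{h,i}=\wnabla h_i(x_t)/\|\wnabla h_i(x_t)\|$; the same third-order Taylor argument, now with radius $r_2$ and $\|\wnabla h_i(x_t)\|\le n\overline L_h$, gives $\|G_h-J_h(x_t)\widetilde J_h(x_t)^\top\|\le n\overline L_h R\,r_2^2$. A final Weyl perturbation then yields $\sigma_{\min}(G_h)\ge\underline L_h^2/32-\overline L_h R(r_1^2+n r_2^2)$, as claimed. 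I expect the only genuinely delicate point to be the sub-exponential quadratic-form concentration that keeps the sample size dimension-free in $n$; the rest is Taylor-remainder bookkeeping together with Weyl's perturbation inequality.
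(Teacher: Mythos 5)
Your proposal is correct, and it shares the paper's overall skeleton: a symmetric-difference Taylor expansion giving $\wJ_h(x_t)=J_h(x_t)\widehat\Sigma_t+\epsilon(x_t)$ with $\widehat\Sigma_t=\frac{n}{T_B}\sum_i u_iu_i^\top$, a Weyl perturbation reducing the claim to a lower bound on $\lmin\big(J_h(x_t)\widehat\Sigma_t J_h(x_t)^\top\big)$, an $\epsilon$-net over $\mathbb{S}^{m-1}$ plus a union bound over the $T_G$ iterations, and a final Taylor/Weyl step transferring the bound to $G_h$. Where you genuinely diverge is the core probabilistic ingredient. The paper's Theorem \ref{thm:small-ball} (instantiated in Lemma \ref{lemma:auxi-bound-cov}) is a small-ball argument: its only distributional inputs are the anti-concentration bound $\bP\big(|u_i^\top z|\ge \tfrac{1}{2\sqrt{n}}\|z\|\big)\ge \tfrac12$ and the worst-case bound $K=n\|A\|^2$, and the per-direction lower bound comes from a Chernoff bound on Bernoulli indicators $\mathbb{I}\{\langle v_i,z\rangle^2\ge \tau^2\}$. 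You instead invoke sub-Gaussianity of spherical marginals, so that $n(a^\top u_i)^2$ is sub-exponential with $\psi_1$-parameter $O(\|a\|^2)$, and apply Bernstein per direction. Both routes yield a per-direction failure probability $e^{-cT_B}$ with an absolute constant $c$, and both pay the identical $m\log\big(n\overline{L}_h^2/\underline{L}_h^2\big)$ price through the net resolution (which must offset the deterministic bound $\|S\|\le n\overline{L}_h^2$), so the sample-size requirement and the $\underline{L}_h^2/32$ constant come out the same. What each buys: your Bernstein route is the more standard modern argument, gives two-sided control for free, and makes transparent why the rate is dimension-free in $n$ (the quadratic form's $\psi_1$ norm is proportional to its mean); the paper's small-ball route is weaker in its hypotheses — anti-concentration plus boundedness only, no tail assumptions — so it is stated as a reusable theorem that would survive direction distributions for which sub-exponentiality fails. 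One shared piece of bookkeeping worth flagging: the per-sample Taylor error $\|e_i\|\le\tfrac{R}{6}r_1^2$ aggregates through $\frac{n}{T_B}\sum_i e_iu_i^\top$ to a bias of order $\tfrac{n}{6}Rr_1^2$, so your ``$\|\epsilon(x_t)\|=O(r_1^2)$'' (and the paper's $\|\epsilon(x_t)\|\le Rr_1^2$) both suppress a factor of $n$; this affects constants and admissible radii, not the structure of either argument.
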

Proving Lemma \ref{lemma:bound-lmin-JwJ} will need the following fundamental theorem that uses Small-ball condition to prove anti-concentration:
\begin{theorem}[Small-Ball Lower Bound on Minimum Eigenvalue on Empirical Covariance Matrix]\label{thm:small-ball}
Let $u_1, \dots, u_N \in \mathbb{R}^n$ be i.i.d. random vectors. Suppose there exist constants $\tau > 0$, $p > 0$, and $K > 0$ such that:
\vspace{5pt}
\begin{enumerate}
    \item (\textbf{Small-ball condition}) For all $z \in \mathbb{S}^{n-1}$:
$\mathbb{P}(|\langle u_i, z \rangle| \ge \tau) \ge p.$
\item For all $z \in \mathbb{S}^{n-1}$:
$|\langle u_i, z \rangle|^2 \le K.$
\end{enumerate}
\vspace{5pt}
Then for any $\delta \in (0, 1)$, if  $N \ge \frac{8}{p^2} \left( n \log\left(\frac{24K}{\tau^2 p}\right) + \log\left(\frac{1}{\delta}\right) \right)$
then with probability at least $1 - \delta$,
$\lambda_{\min}\left( \frac{1}{N} \sum_{i=1}^N u_i u_i^\top \right) \ge \frac{\tau^2 p}{4}.$
\begin{proof}
Define $S := \frac{1}{N} \sum_{i=1}^N u_i u_i^\top.$
Then for any $z \in \mathbb{S}^{n-1}$ we have $z^\top S z = \frac{1}{N} \sum_{i=1}^N \langle u_i, z \rangle^2$
Let $Z_i = \langle u_i, z \rangle^2$. By assumption, $\mathbb{P}(Z_i \ge \tau^2) \ge p$. Define indicator variables $A_i := \mathbb{I}\{Z_i \ge \tau^2\}$. Then $A_i \sim \text{Bernoulli}(\ge p)$, and
$z^\top S z \ge \frac{\tau^2}{N} \sum_{i=1}^N A_i.$ By the Chernoff bound, for all $N \ge 1$,
$\mathbb{P}\left( \sum_{i=1}^N A_i < \frac{pN}{2} \right) \le \exp\left(-\frac{p^2 N}{8}\right).$
Thus, with probability at least $1 - \exp\left(-\frac{p^2 N}{8}\right)$, for a fixed $z$,
$
z^\top S z \ge \frac{\tau^2 p}{2}.
$

Now construct an $\epsilon$-net $\mathcal{N}_\epsilon \subset \mathbb{S}^{n-1}$ with $|\mathcal{N}_\epsilon| \le (3/\epsilon)^n$. Using the union bound:
\begin{talign*}
\mathbb{P}\left( \exists z \in \mathcal{N}_\epsilon : z^\top S z < \frac{\tau^2 p}{2} \right) \le (3/\epsilon)^n \cdot \exp\left(-\frac{p^2 N}{8}\right).
\end{talign*}
To ensure the right hand side is $\le \delta$, it suffices that:
$N \ge \frac{8}{p^2} \left( n \log\left(\frac{3}{\epsilon}\right) + \log\left(\frac{1}{\delta}\right) \right).$
To extend from the net to all $z$, note:
\begin{talign*}
|z^\top S z - \hat{z}^\top S \hat{z}| = \left|(z+\hat{z}) S (z-\hat{z})\right|\le 2K\epsilon
\end{talign*}
And thus choosing $\epsilon \sim \frac{\tau^2 p}{8K}$ ensures for all $z \in \mathbb{S}^{n-1}$:
$z^\top S z \ge \frac{\tau^2 p}{2} -2 K \cdot \epsilon \ge \frac{\tau^2 p}{4}.$
This concludes the proof.
\end{proof}

\end{theorem}

The following lemma is an immediate corollary of Theorem \ref{thm:small-ball}.
\begin{lemma}\label{lemma:auxi-bound-cov}
    \!\!Given a fixed matrix $A\!\in\!\bR^{m\times n}$ and random variables $u_1, \!u_2,\dots,\!u_N \!\in\! \bR^n$ where $u_i$ is sampled i.i.d. from the unit sphere, we have that when
    \begin{talign*}
        N \!\ge\! 32\left(m \log\left(192\cdot n\cdot\kappa(AA^\top)\right) \!+ \!\log\left(\frac{1}{\delta}\right)\right)\sim O\left(m(\log(n) \!+ \!\log(\kappa(AA^\top)) \!+ \!\log\left(\frac{1}{\delta}\right)\right)\!.
    \end{talign*}
    where $\kappa(AA^\top):=\frac{\lmax(AA^\top)}{\lmin(AA^\top)}$, then with probability at least $1-\delta$,
    \begin{talign*}
        \lmin\left(\frac{n}{N}\sum_{i=1}^N Au_i  u_i^\top A^\top\right) \ge \frac{\lmin(AA^\top)}{32}
    \end{talign*}
    \vspace{-10pt}
    \begin{proof}
        Define $v_i = \sqrt{n}Au_i$. From the property of random uniform unit sphere vectors \cite{klartag2017super} we have that for any $z\in \bR^{n}$,
        \begin{talign*}
            \bP\left(|u_i^\top z| \ge \frac{1}{2\sqrt{n}} \|z\|\right) \ge 1/2
        \end{talign*}
        Thus for any $z'\in\bR^{m}$,
        \begin{talign*}
        \bP\left(|v_i^\top z'| \ge \frac{1}{2} \sgmin(A)\|z'\|\right)\ge
            \bP\left(|v_i^\top z'| \ge \frac{1}{2} \|Az'\|\right) = \bP\left(|u_i^\top z| \ge \frac{1}{2\sqrt{n}} \|z\|\right)\ge 1/2
        \end{talign*}
        Thus, the vector $v_i$'s satisfies Condition 1 in Theorem \ref{thm:small-ball} with
            $\tau = \frac{\sgmin(A)}{2}, \quad p = \frac{1}{2}.$
        Further, given that $u_i\in \bS^{n-1}$,
           $ |v_i^\top z'| = \sqrt{n}|u_i^\top A^\top z'|\le \sqrt{n}\|A\|.$
        Thus the vector $v_i$'s satisfies Condition 2 in Theorem \ref{thm:small-ball} with $K = n\|A\|^2.$
        Directly applying Theorem \ref{thm:small-ball} will finish the proof.
    \end{proof}
\end{lemma}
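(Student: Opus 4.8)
The plan is to reduce the claim to a direct application of \cref{thm:small-ball} via a change of variables, rather than reasoning about the spheres and the map $A$ separately. The key observation is that, upon defining the transformed vectors $v_i := \sqrt{n}\,A u_i \in \bR^m$, the matrix of interest is exactly the empirical second-moment matrix $\frac{1}{N}\sum_{i=1}^N v_i v_i^\top = \frac{n}{N}\sum_{i=1}^N A u_i u_i^\top A^\top$. Thus it suffices to verify that the $v_i$ satisfy the two hypotheses of \cref{thm:small-ball} in $\bR^m$ (note the relevant dimension is $m$, not $n$) with suitable constants $\tau,p,K$, and then to check that the stated sample-size threshold on $N$ is precisely what the theorem demands for those constants. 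Throughout I would use that the finiteness of $\kappa(AA^\top)$ forces $A$ to have full row rank, so that $\sigma_{\min}(A):=\sqrt{\lmin(AA^\top)}>0$ is genuinely the smallest singular value.

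First I would verify the small-ball condition. For a fixed unit vector $z'\in\bS^{m-1}$, write $\langle v_i,z'\rangle = \sqrt{n}\,\langle u_i, A^\top z'\rangle$ and set $z := A^\top z'$, so that $\|z\|\ge \sigma_{\min}(A)$ by the singular-value bound. I would then invoke the anti-concentration property of uniform unit-sphere vectors from \cite{klartag2017super}, namely $\bP(|u_i^\top z|\ge \tfrac{1}{2\sqrt n}\|z\|)\ge \tfrac12$ for every $z$, which after multiplying through by $\sqrt n$ yields $\bP(|\langle v_i,z'\rangle|\ge \tfrac12\sigma_{\min}(A))\ge \tfrac12$. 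Hence the small-ball condition holds uniformly over $z'\in\bS^{m-1}$ with $\tau=\tfrac12\sigma_{\min}(A)$ and $p=\tfrac12$.

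Next I would verify boundedness, which is immediate: since $\|u_i\|=1$ and $\|z'\|=1$, $|\langle v_i,z'\rangle|^2 = n\,|u_i^\top A^\top z'|^2 \le n\|A\|^2$, giving $K=n\|A\|^2=n\,\lmax(AA^\top)$. Substituting $\tau,p,K$ into the theorem's sample-size bound gives $\tfrac{8}{p^2}=32$ and $\tfrac{24K}{\tau^2 p}=192\,n\,\kappa(AA^\top)$, which reproduces the stated threshold on $N$ exactly, and the theorem's conclusion $\lmin\big(\tfrac1N\sum_i v_i v_i^\top\big)\ge \tfrac{\tau^2 p}{4}=\tfrac{\lmin(AA^\top)}{32}$ is precisely the desired inequality.

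The only genuinely delicate step is the small-ball verification: one must carefully transfer the anti-concentration of $u_i$, which lives in the ambient dimension $n$, through the linear map $A$ to the $m$-dimensional vectors $v_i$, taking care to lower-bound $\|A^\top z'\|$ by $\sigma_{\min}(A)$ so that the dependence on the conditioning of $A$ is retained rather than lost. Everything else is routine: the boundedness bound is a one-line estimate, and matching the constants $\tau,p,K$ to the threshold is arithmetic. I would expect the anti-concentration inequality itself to be quoted from the cited reference rather than re-derived.
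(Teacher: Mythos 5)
Your proposal is correct and follows essentially the same route as the paper's own proof: define $v_i=\sqrt{n}\,Au_i$, transfer the sphere anti-concentration bound through $A^\top$ to get the small-ball condition with $\tau=\tfrac12\sigma_{\min}(A)$, $p=\tfrac12$, take $K=n\|A\|^2$, and apply Theorem~\ref{thm:small-ball} in dimension $m$. If anything, your write-up is slightly more careful than the paper's (you make explicit that the theorem is invoked in dimension $m$ and that finiteness of $\kappa(AA^\top)$ forces full row rank, and you avoid the paper's typo $\|Az'\|$ in place of $\|A^\top z'\|$), but the argument is the same.
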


\section{Auxiliaries}
\begin{lemma}\label{lemma:taylor-error} For $\wJ(x)t), \wnabla f(x_t)$ defined in \eqref{eq:two-point-estimator} and $G_h, G_f$ in \eqref{eq:def-Gf-Gh}, we have
     \begin{align*}
&\textstyle \|G_h \!-\! J_h(x_t)\wJ_h(x_t)^\top\|\! \le\! n\overline{L}_h Rr_2^2, ~ \|G_f \!-\! J_h(x_t)\wnabla f(x_t)\|\!\le \!nL_f Rr_2^2, ~\|\wnabla f(x_t)\|\!\le\! nL_f. 
        \end{align*}
        \vspace{-10pt}
        \begin{proof}
            The first two inequalities obtained from standard truncation-error bounds for the central difference directional derivative \cite{suli2003introduction}. The last inequality can be derived by the Lagrange's Mean Value Theorem, where we can show that for any unit vector $u$, $\|\frac{f(x+ru)-f(x-ru)}{2r} u\| = \|u u^\top\nabla f(x+\bar ru)\|\le L_f $.
        \end{proof}
\end{lemma}
\begin{lemma}\label{lemma:auxiliary-1}
    For any $\eta \!>\!0, t,s \!\in \!\mathbb{N},  t \!\ge\! s\!\ge\!0$:
        $\prod_{\tau=s}^{t-1} (1-\frac{\eta}{\sqrt{\tau+1}}) \le e^{-\eta\left(\sqrt{t} - \sqrt{s+1}\right)} .$
    \begin{proof}
            $\quad \prod_{\tau=s}^{t-1} (1-\frac{\eta}{\sqrt{\tau+1}}) \le \prod_{\tau=s}^{t-1}e^{-\frac{\eta}{\sqrt{\tau+1}}}
            \le e^{-\eta\sum_{\tau=s}^{t-1}\frac{1}{\sqrt{\tau+1}}}\le e^{-\eta(\sqrt{t} - \sqrt{s+1})}$.
    \end{proof}
\end{lemma}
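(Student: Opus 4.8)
The plan is to bound the product factor-by-factor using the elementary inequality $1-x\le e^{-x}$, valid for every real $x$, and then reduce the claim to a lower bound on a harmonic-type sum. First I would write, for each index $\tau$, the estimate $1-\frac{\eta}{\sqrt{\tau+1}}\le \exp\!\big(-\frac{\eta}{\sqrt{\tau+1}}\big)$. Since every factor on the left is nonnegative under the stability regime in which the lemma is invoked (there $\eta$ plays the role of $\eta\lambda_{\min}(K)$ and the condition $\eta_\tau\lambda_{\min}(K)<1$ forces $\frac{\eta}{\sqrt{\tau+1}}<1$), I may multiply these inequalities over $\tau=s,\dots,t-1$ to obtain $\prod_{\tau=s}^{t-1}\!\big(1-\frac{\eta}{\sqrt{\tau+1}}\big)\le \exp\!\big(-\eta\sum_{\tau=s}^{t-1}\frac{1}{\sqrt{\tau+1}}\big)$.

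The remaining task is to show $\sum_{\tau=s}^{t-1}\frac{1}{\sqrt{\tau+1}}\ge \sqrt{t}-\sqrt{s+1}$. Reindexing by $j=\tau+1$ turns this into $\sum_{j=s+1}^{t}\frac{1}{\sqrt{j}}\ge \sqrt{t}-\sqrt{s+1}$. I would establish it by the per-term bound $\frac{1}{\sqrt{j}}\ge \sqrt{j}-\sqrt{j-1}$, which follows by rationalizing $\sqrt{j}-\sqrt{j-1}=\frac{1}{\sqrt{j}+\sqrt{j-1}}\le\frac{1}{\sqrt{j}}$. Summing this estimate telescopes to $\sum_{j=s+1}^{t}\frac{1}{\sqrt{j}}\ge \sqrt{t}-\sqrt{s}\ge \sqrt{t}-\sqrt{s+1}$, where the final step uses $\sqrt{s}\le\sqrt{s+1}$. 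Substituting into the exponent gives $\exp\!\big(-\eta\sum_{\tau=s}^{t-1}\frac{1}{\sqrt{\tau+1}}\big)\le \exp\!\big(-\eta(\sqrt{t}-\sqrt{s+1})\big)$, which is exactly the asserted inequality.

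There is essentially no hard part: the lemma is a routine monotonicity/telescoping estimate. The only mild subtlety is the sign of the factors, needed to pass from the factorwise inequality to the product inequality; this is harmless because the bound is vacuous whenever a factor is negative (the right-hand side then already exceeds one), and it holds genuinely in the positive regime where the lemma is used. As an alternative to the telescoping step, the same sum bound follows from the integral comparison $\sum_{j=s+1}^{t}\frac{1}{\sqrt{j}}\ge\int_{s+1}^{t+1}x^{-1/2}\,dx=2(\sqrt{t+1}-\sqrt{s+1})$, which also dominates $\sqrt{t}-\sqrt{s+1}$ for $t\ge s$; either route closes the proof.
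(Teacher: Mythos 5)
Your proof is correct and follows essentially the same route as the paper: the termwise bound $1-x\le e^{-x}$, converting the product into an exponential of a sum, and then the estimate $\sum_{\tau=s}^{t-1}\frac{1}{\sqrt{\tau+1}}\ge \sqrt{t}-\sqrt{s+1}$, which you fill in explicitly via the telescoping bound $\frac{1}{\sqrt{j}}\ge \sqrt{j}-\sqrt{j-1}$ (a step the paper leaves implicit). One caveat on your side remark: the parenthetical claim that the bound is vacuous whenever a factor is negative is wrong --- the right-hand side $e^{-\eta(\sqrt{t}-\sqrt{s+1})}$ is at most one (since $s+1\le t$), and with an even number of negative factors the product can be positive and exceed it (e.g.\ $s=0$, $t=2$, $\eta=3$ gives product $\approx 2.24$ versus right-hand side $\approx 0.29$) --- but this is harmless here because, as you correctly observe, the lemma is only invoked in the regime where the stability condition makes every factor positive.
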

\begin{lemma}\label{lemma:auxiliary-2} For any $\eta >0$ and $t,s \in \mathbb{N},  t \ge s\ge0$: 
        $\sum_{s=1}^{t}e^{\eta\sqrt{s}}\frac{1}{\sqrt{s}} \le \frac{2}{\eta}2e^{\eta\sqrt{t+1}}$
    \begin{proof}
        $
            \sum_{s=1}^{t}e^{\eta\sqrt{s}}\frac{1}{\sqrt{s}} \le \int_{s=1}^{t+1} e^{\eta\sqrt{s}}\frac{
            1
            }{\sqrt{s}}ds = 2\int_{s=1}^{t+1} e^{\eta\sqrt{s}} d\sqrt{s} \le \frac{2}{\eta}e^{\eta\sqrt{t+1}} 
        $.
    \end{proof}
\end{lemma}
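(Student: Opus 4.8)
The plan is to recognize that, up to the constant factor $\frac{2}{\eta}$, the summand is the derivative of the antiderivative $G(u):=e^{\eta\sqrt{u}}$: since $G'(u)=\frac{\eta}{2\sqrt{u}}e^{\eta\sqrt{u}}$, we have $\frac{e^{\eta\sqrt{s}}}{\sqrt{s}}=\frac{2}{\eta}G'(s)$. The sum should therefore be controlled by $\int G'=G$, so the statement reduces to a sum–integral comparison of the form $\sum_{s=1}^{t}\frac{e^{\eta\sqrt{s}}}{\sqrt{s}}\le\int_{0}^{t+1}\frac{e^{\eta\sqrt{u}}}{\sqrt{u}}\,du$, followed by an explicit evaluation of the integral.

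First I would evaluate the integral by the substitution $w=\sqrt{u}$, which turns $\frac{1}{\sqrt{u}}\,du$ into $2\,dw$ and gives
\begin{talign*}
\int_{0}^{t+1}\frac{e^{\eta\sqrt{u}}}{\sqrt{u}}\,du=\int_{0}^{\sqrt{t+1}}2e^{\eta w}\,dw=\frac{2}{\eta}\left(e^{\eta\sqrt{t+1}}-1\right)\le\frac{2}{\eta}e^{\eta\sqrt{t+1}}.
\end{talign*}
Although the integrand blows up at $u=0$, the singularity $u^{-1/2}$ is integrable, so extending the lower limit all the way to $0$ (rather than to $1$) is legitimate and, as explained next, necessary.

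The key step is the sum–integral comparison, and this is where I expect the main obstacle. The naive bound $\sum_{s=1}^{t}g(s)\le\int_{1}^{t+1}g(u)\,du$, with $g(u)=e^{\eta\sqrt{u}}/\sqrt{u}$, is \emph{not} valid in general: writing $w=\sqrt{u}$ one sees that $g$ decreases on $(0,1/\eta^2)$ and increases on $(1/\eta^2,\infty)$, so on the decreasing branch a left-endpoint rectangle overshoots the integral and the one-line comparison fails (already at $t=1$ for small $\eta$). To fix this I would exploit unimodality: for each index $s$ on the decreasing branch ($s\le 1/\eta^2$) use $g(s)\le\int_{s-1}^{s}g(u)\,du$ (valid since $g(u)\ge g(s)$ for $u\in[s-1,s]$), and for each $s$ on the increasing branch ($s>1/\eta^2$) use $g(s)\le\int_{s}^{s+1}g(u)\,du$. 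These unit intervals are disjoint and contained in $[0,t+1]$, so summing gives $\sum_{s=1}^{t}g(s)\le\int_{0}^{t+1}g(u)\,du$; it is precisely here that the integral must begin at $0$, since the first decreasing-branch term $g(1)$ is charged against $\int_{0}^{1}g$. Combining with the evaluation above yields the claimed bound $\frac{2}{\eta}e^{\eta\sqrt{t+1}}$.

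I expect the non-monotonicity to be the only genuine difficulty; everything else is direct computation. In the regime relevant to the main proof the exponent plays the role of $\eta\lambda_{\min}(K)<1$, so $1/\eta^2>1$ and the decreasing branch really does contain the index $s=1$, confirming that the careful treatment—and the lower limit $0$—cannot be dispensed with.
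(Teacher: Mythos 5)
Your proof is correct, and at the top level it follows the same route as the paper (a sum-to-integral comparison followed by the substitution $w=\sqrt{u}$). The substantive difference is that the paper's proof \emph{is} precisely the naive one-line comparison you flag as invalid: it asserts $\sum_{s=1}^{t}g(s)\le\int_{1}^{t+1}g(u)\,du$ for $g(u)=e^{\eta\sqrt{u}}/\sqrt{u}$ with no further justification. Since $g'(u)=\frac{e^{\eta\sqrt{u}}}{2u^{3/2}}\left(\eta\sqrt{u}-1\right)$, the integrand decreases on $(0,1/\eta^{2})$, and in the regime where the lemma is actually invoked (diminishing stepsizes in Theorem~\ref{thm:zeroth-order-eq}, where the role of $\eta$ is played by $\eta\lambda_{\min}(K)<1$ by the stability condition) the index $s=1$ does lie on the decreasing branch. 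Your $t=1$, small-$\eta$ counterexample is right: the left side is $e^{\eta}\to 1$ while $\int_{1}^{2}g(u)\,du=\frac{2}{\eta}\left(e^{\eta\sqrt{2}}-e^{\eta}\right)\to 2(\sqrt{2}-1)\approx 0.83$, so the paper's first inequality genuinely fails there, even though its final conclusion survives because $\frac{2}{\eta}e^{\eta\sqrt{t+1}}$ is large for small $\eta$.

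Your repair — charging each decreasing-branch term $g(s)$ to $[s-1,s]$ and each increasing-branch term to $[s,s+1]$, noting these unit intervals have disjoint interiors inside $[0,t+1]$, and then computing $\int_{0}^{t+1}g(u)\,du=\frac{2}{\eta}\left(e^{\eta\sqrt{t+1}}-1\right)\le\frac{2}{\eta}e^{\eta\sqrt{t+1}}$ — is sound, including the handling of the integrable singularity at $u=0$, and it covers the easy case $\eta\ge 1$ (where every index is on the increasing branch) as well. So your proposal does not merely match the paper; it fixes it. Two side remarks: the identical unjustified sum-to-integral step occurs in Lemma~\ref{lemma:auxiliary-3} (integrand $e^{\eta\sqrt{u}}/u$, decreasing on $(0,4/\eta^{2})$), and your splitting argument repairs it in the same way; and the factor ``$\frac{2}{\eta}2$'' in the lemma statement appears to be a stray typo — both you and the paper actually establish the stronger bound $\frac{2}{\eta}e^{\eta\sqrt{t+1}}$, which is what the proof of Theorem~\ref{thm:zeroth-order-eq} uses.
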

\begin{lemma}\label{lemma:auxiliary-3} For any $\eta >0$ and $t,s \in \mathbb{N},  t \ge s\ge0$
    \begin{talign*}
        \sum_{s=1}^{t}e^{\eta\sqrt{s}}\frac{1}{s} \le \frac{e^2}{\eta} + \frac{2}{\eta}\frac{1}{\sqrt{t+1}} e^{\eta\sqrt{t+1}}
    \end{talign*}
    \begin{proof}
    \vspace{-30pt}
    \begin{talign*}
       &\quad  \sum_{s=1}^{t}e^{\eta\sqrt{s}}\frac{1}{s} \le \int_{s=1}^{t+1} e^{\eta\sqrt{s}}\frac{
            1
            }{{s}}ds \le  2\int_{s=1}^{t+1} e^{\eta\sqrt{s}}\frac{1}{\sqrt{s}} d\sqrt{s}\\
            &= \int_{x=1}^{\sqrt{t+1}} \frac{1}{x}e^{\eta x}dx \le \int_{x=1}^{\frac{2}{\eta}} e^{\eta x}dx + \int_{x=\frac{2}{\eta}}^{\sqrt{t+1}} \frac{1}{x}e^{\eta x}dx\\
            &\le \int_{x=1}^{\frac{2}{\eta}} e^{\eta x} dx+ \int_{x=\frac{2}{\eta}}^{\sqrt{t+1}} \left(\frac{2}{x} - \frac{2}{\eta x^2}\right)e^{\eta x} dx\le \frac{e^2}{\eta} + \frac{2}{\eta}\frac{1}{\sqrt{t+1}} e^{\eta\sqrt{t+1}}.
            \end{talign*}
            \vspace{-20pt}
    \end{proof}
\end{lemma}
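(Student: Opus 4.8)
The plan is to prove the bound by a sum-to-integral comparison, a change of variables $x=\sqrt s$ that reduces the estimate to controlling $\int_1^{\sqrt{t+1}} e^{\eta x}/x\,dx$, and a splitting argument at the threshold $x=2/\eta$ that treats the head and tail of this integral by different mechanisms.

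First I would pass from the sum to an integral. Writing $g(s)=e^{\eta\sqrt s}/s$, I compare $\sum_{s=1}^t g(s)$ to $\int_1^{t+1} g(s)\,ds$. The one subtlety is that $g$ is \emph{not} monotone: since $g'(s)=e^{\eta\sqrt s}s^{-2}\bigl(\tfrac{\eta\sqrt s}{2}-1\bigr)$, it decreases on $s\le 4/\eta^2$ and increases afterward. On the increasing stretch the termwise bound $g(s)\le\int_s^{s+1}g$ applies, while on the decreasing stretch I use $g(s)\le\int_{s-1}^{s}g$; combining the two monotone regions yields $\sum_{s=1}^t g(s)\le \int_1^{t+1}g(s)\,ds$ up to an $O(1)$ correction coming from $s=1$ and from the turning point $s\approx 4/\eta^2$. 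This correction is bounded by a constant and is absorbed into $e^2/\eta$ using that the stability condition $\eta_t\lambda_{\min}(K)<1$ forces the effective rate here to satisfy $\eta<1$.

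Next I would change variables. Substituting $x=\sqrt s$ (so $ds=2x\,dx$) turns the integral into a multiple of $\int_1^{\sqrt{t+1}} e^{\eta x}/x\,dx$, exactly as in the displayed computation. This is where the real work lies, because $\int e^{\eta x}/x\,dx$ is non-elementary (an exponential integral), so I cannot simply antidifferentiate. The key idea is to split the range at $x=2/\eta$. On $[1,2/\eta]$ I discard $1/x\le 1$ and integrate the bare exponential, giving $\int_1^{2/\eta}e^{\eta x}\,dx\le \tfrac1\eta e^{2}$. On the tail $[2/\eta,\sqrt{t+1}]$ I exploit that $x\ge 2/\eta$ forces $1/x\le \eta/2$: setting $\phi(x)=e^{\eta x}/x$ and using $\phi'(x)=\eta\phi(x)-e^{\eta x}/x^2$ together with $e^{\eta x}/x^2=\phi(x)/x\le \tfrac\eta2\phi(x)$ gives $\phi(x)\le \tfrac2\eta\phi'(x)$, so the integrand is dominated by a perfect derivative. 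Integrating and discarding the negative lower-endpoint contribution yields $\int_{2/\eta}^{\sqrt{t+1}}\phi\le \tfrac2\eta\,\phi(\sqrt{t+1})=\tfrac2\eta\frac{e^{\eta\sqrt{t+1}}}{\sqrt{t+1}}$. Summing the two pieces produces a bound of the stated form $\frac{e^2}{\eta}+\frac{2}{\eta}\frac{e^{\eta\sqrt{t+1}}}{\sqrt{t+1}}$.

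The main obstacle is the tail integral: because $\int e^{\eta x}/x\,dx$ has no closed form, the estimate hinges on recognizing that past the threshold $x=2/\eta$ the integrand is bounded by a constant multiple of $(e^{\eta x}/x)'$, converting a non-integrable-in-closed-form quantity into a telescoping one. Choosing the split point precisely at $2/\eta$ is what makes both the constant head term $e^2/\eta$ and the tail factor $2/\eta$ emerge cleanly: applying $1/x\le 1$ across the whole range would make the tail explode, whereas ignoring the threshold would leave an unresolved non-elementary remainder.
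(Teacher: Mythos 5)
Your proposal is correct in substance and takes essentially the same route as the paper: sum-to-integral comparison, the substitution $x=\sqrt{s}$, the split at $x=2/\eta$, the head bound via $1/x\le 1$, and the tail bound by dominating the integrand with a perfect derivative. Your inequality $\phi(x)\le\tfrac{2}{\eta}\phi'(x)$ on $[2/\eta,\sqrt{t+1}]$ is exactly the paper's comparison $\tfrac{1}{x}e^{\eta x}\le\bigl(\tfrac{2}{x}-\tfrac{2}{\eta x^{2}}\bigr)e^{\eta x}$, since $\tfrac{2}{\eta}\phi'(x)=\bigl(\tfrac{2}{x}-\tfrac{2}{\eta x^{2}}\bigr)e^{\eta x}$.

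That said, the two precautions you add expose real sloppiness in the paper's own proof, and one of them affects the constants. The monotonicity repair is sound: splitting at the turning point $s=4/\eta^{2}$ gives $\sum_{s=1}^{t}g(s)\le g(1)+\int_{1}^{t+1}g(s)\,ds$ with $g(1)=e^{\eta}$, which, as you note, can be absorbed when $\eta\le 1$. More consequentially, the substitution yields $\int_{1}^{t+1}e^{\eta\sqrt{s}}s^{-1}\,ds=2\int_{1}^{\sqrt{t+1}}x^{-1}e^{\eta x}\,dx$, and the paper silently drops this factor of $2$ in the step ``$2\int e^{\eta\sqrt{s}}\tfrac{1}{\sqrt{s}}\,d\sqrt{s}=\int_{x=1}^{\sqrt{t+1}}\tfrac{1}{x}e^{\eta x}\,dx$''. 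What your argument (and the paper's, once repaired) actually proves is $\sum_{s=1}^{t}e^{\eta\sqrt{s}}/s\le e^{\eta}+\tfrac{2e^{2}}{\eta}+\tfrac{4}{\eta}\,e^{\eta\sqrt{t+1}}/\sqrt{t+1}$, i.e., the stated bound with doubled constants. The literal statement is not recoverable by this route and indeed appears false: at $\eta=1$, $t=100$, since $e^{\sqrt{s}}/s$ is increasing for $s\ge 4$, the sum is at least $\int_{4}^{100}e^{\sqrt{u}}u^{-1}\,du=2\int_{2}^{10}e^{x}x^{-1}\,dx\approx 4.97\times 10^{3}$, which exceeds the stated bound $e^{2}+2e^{\sqrt{101}}/\sqrt{101}\approx 4.61\times 10^{3}$. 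This is a defect you inherit from the paper rather than a flaw in your plan; since the lemma only contributes constant factors to the residual terms in Theorems~\ref{thm:zeroth-order-eq} and~\ref{thm:zeroth-order-ineq}, the doubling is harmless downstream, but your hedged phrasing (``a bound of the stated form'') is the defensible claim, and the lemma's constants should be adjusted accordingly.
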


\setlength{\bibsep}{0pt}
\renewcommand{\url}[1]{}
\providecommand{\doi}[1]{}

\providecommand{\urlprefix}{}
\providecommand{\doiprefix}{}

\providecommand{\urldateprefix}{}
\bibliographystyle{abbrvnat}
\bibliography{references,bib}
\end{document}